\theoremstyle{plain}
\newtheorem{proposition}{Proposition}
\newtheorem{theorem}[proposition]{Theorem}
\newtheorem{lemma}[proposition]{Lemma}
\newtheorem{corollary}[proposition]{Corollary}
\newtheorem{hypothesis}[proposition]{Hypothesis}
\theoremstyle{definition}
\newtheorem{definition}[proposition]{Definition}
\theoremstyle{definition}
\newtheorem{remark}[proposition]{Remark}
\numberwithin{equation}{section}
\numberwithin{proposition}{section}
\gdef\myletter{}
\let\savetheequation\theequation
\def\theequation{\savetheequation\myletter}
\newcommand{\CC}{{\mathbb C}}
\newcommand{\RR}{{\mathbb R}}
\renewcommand{\date}{\today}
\begin{document}

\author{Thomas Bloom*}
\thanks{*supported by an NSERC of Canada grant}
\subjclass{60B20,31A15}
\keywords{equilibrium measure}
\address{University of Toronto, Toronto, Ontario, M5S2E4, Canada}
\email{bloom@math.toronto.edu}
\title{Large Deviation for outlying coordinates  in $\beta$ ensembles}

\maketitle 

\vspace{-4mm}

\begin{center}
May 28, 2013
\end{center}

\vspace{4mm}

\begin{abstract}
For $Y$ a subset of the complex plane, a $\beta$ ensemble is a sequence of probability measures $ Prob_{n,\beta ,Q}$ on $Y^n$ for $n=1,2,\ldots$ depending on  a positive real parameter $\beta$ and   a real-valued continuous function $Q$ on $Y.$ We consider the associated sequence of probability measures on $Y$ where the probability of a subset $W$ of $Y$  is given by the probability that at least one coordinate of $Y^n$ belongs to $W.$ With appropriate restrictions on $Y,Q$ we prove a large deviation principle for this sequence of probability measures. This extends a result of Borot-Guionnet to subsets of the complex plane and to $\beta $ ensembles defined with measures using a Bernstein-Markov condition.

\end{abstract}
\section{introduction}
$\beta$ ensembles are generalizations of the joint probability distributions of the eigenvalues  of the classical matrix ensembles. 
 They are defined as follows: let $Y$ be a closed subset of $\CC$, $Q$ a real-valued continuous function on $Y$ and $\beta>0.$ 
Consider the family of probability distributions $Prob_{n,\beta ,Q}$ for $n=1,2,...$ defined on $Y^n$ by:
  
\begin{equation}\label{i1}Prob_{n,\beta ,Q }=\frac{A_{n,\beta ,Q }(z)}{Z_{n,\beta ,Q }}d\tau(z)\end{equation}where
  \begin{equation}\label{i2}A_{n,\beta ,Q }(z):=|D(z_1,\ldots ,z_n)|^{\beta }\exp (-2n[Q(z_1)+\ldots +Q(z_n)]),\end{equation} 
 $$D(z_1,\ldots ,z_n)=\prod_{1\leq i<j\leq n}(z_i-z_j)$$ denotes the Vandermonde determinant, and
the normalizing constants $Z_{n,\beta ,Q}$  are given by:
 \begin{equation}\label{i3} Z_{n,\beta ,Q }(Y)= Z_{n,\beta ,Q }:=\int_{Y^n}A_{n,\beta ,Q }(z)d\tau(z).\end{equation}
 Here $$d\tau (z) =d\tau (z_1)\ldots d\tau (z_n)$$and $\tau$ is an appropriate measure on $Y.$ In particular, the support of $\tau$ is $Y.$  We assume that $R=2Q/\beta$ is of superlogarithmic (see (\ref{p0}))  growth if $Y$ is unbounded. 
 The existence of the integrals in the case of $Y$ unbounded is dealt with in the course of proving Theorem 7.3.

In this paper we will assume the measure $\tau$ satisfies a weighted Bernstein-Markov inequality (see section 3 and Hypothesis 3.9). Lebesgue measure in one or two dimensions satisfies  Hypothesis 3.9  but,  in addition,  there are more general  measures which also satisfy  Hypothesis 3.9. In fact there are discrete measures (i.e a countable linear combination of Dirac measures) which satisfy the hypothesis. 

For $Y=\RR,  d\tau=dx\: (\text {Lebesgue measure}), Q(x)=x^2/2$ and $ \beta=1,2,4$ we obtain the joint probability distribution of the eigenvalues of the classical matrix ensembles-respectively the Gaussian orthogonal, unitary and symplectic ensembles \cite{AGZ}.
In this context, the coordinates of a point $x=(x_1,\ldots,x_n)$ are referred to as eigenvalues.

The 2-dimensional version of these probability distributions occurs in the study of the Coulomb gas model (\cite{F}, \cite{HM}). In this model the parameter $\beta$  corresponds to the inverse temperature, $2Q$ to the confining potential, and the coordinates of a point are the positions of particles.

The two-dimensional version also occurs,  for $\beta=2$,  as the distribution of eigenvalues of random normal matrices (\cite{HM}).

These ensembles have been extensively studied,  primarily in the case $Y=\RR$ or $\CC$ and $d\tau$ as Lebesgue measure  (see  \cite{AGZ}, \cite {F}, \cite{HM}, \cite{HD} and the references given there). In particular, 
it is known that the normalized counting measure of a random point,  i.e. the random measure $\frac{1}{n}\sum_{j=1}^n\delta(z_j)$,  where $\delta$ denotes the Dirac measure,  converges, almost surely, $\text{ weak}^*$,  to a non-random measure with compact support -  the weighted equilibrium measure (see (\ref{p1})) . Furthermore a large deviation principle for the normalized counting measure  of a random point  is known \cite{AGZ}. The  large deviation principle has speed $n^2$ and  implies that  the probability of finding a positive proportion of the eigenvalues/particles/coordinates  outside a neighbourhood of the support of the weighted equilibrium measure is asymptotic to $e^{-n^2c}$ for some $c>0.$

In this paper we are concerned with the behaviour of a single eigenvalue/particle/coordinate. We will establish a large deviation principle for the coordinate $z_1\in Y$ under the probability distributions (\ref{i1}).
That is  we will prove a large deviation principle for the countable family of probability distributions on $Y$ given by, for $W$ an open subset of $Y$ 
\begin{equation}\label{i31}\psi_n (W)=Prob_{n,\beta ,Q}\{z_1\in W\}=\frac{1}{Z_{n,\beta ,Q }}\int_W\int_{Y^{n-1}}A_{n,\beta ,Q }(z)d\tau (z)\end{equation}\\
for $n=1,2,\ldots .$

The main results of this paper are Theorems 6.2 and 7.3 which show that
the sequence of probability measures $\psi_n$ satisfies  a large deviation principle with speed 
 $n$ and rate function
\begin{equation}\label{i4}\mathbb{J}_{Y,\beta ,Q}(z_1)=2Q(z_1)- \beta ( \int_Y\log|z_1-t|d\mu_{Y,\beta ,Q}+\rho)\end{equation}where $\mu_{Y,\beta ,Q}$
is the weighted equilibrium measure (see (\ref{p1})) and $\rho$ is a constant (see(\ref{p4})).

This l.d.p. implies that the probability of finding an eigenvalue/ \\particle/coordinate  outside a neighbourhood of the support of the weighted equilibrium measure is asymptotic to $e^{-nc}$ for some $c>0.$

Theorem  6.2 handles the case when $Y$ is compact and Theorem  7.3  the unbounded case. Theorem 6.2 is valid under the following hypotheses. (We let $S_R$ denote the support of the weighted equilibrium measure and $S_R^*$ the points where $\mathbb{J}_{n,\beta ,Q}(z_1)=0$)
\begin{enumerate}

\item Y is a regular set (in the sense of potential theory.)

\item(Hypothesis 3.9) $\tau$ satisfies the weighted BM inequality on any compact neighbourhood of $ S_R^*.$
\item(Hypothesis 6.1) $S_R^*=S_R.$
\end{enumerate}

Now (1) ensures that the weighted Green function (see (\ref{p2})) is continuous. (2) is always satisfied by Lebesgue measure on $\RR$ or $\CC$ (see section 3). (3) is the assumption that the rate function is strictly positive outside $S_R$. In \cite{BG} the corresponding assumption is referred to as control of large deviation.

The unbounded case, Theorem 7.3,  requires  additional assumptions. Theorem 7.3 is valid under the following assumptions: Condition (1) above,  is replaced by the hypothesis that the intersection of $Y$ with sufficiently large discs centre the origin is regular. (2) and (3) above must hold. The additional conditions are on the growth of $\tau$ and $R=2Q/{\beta}.$ Namely:\\
(4) (see(\ref{p0})) For some $b>0,
\lim_{|z|\to\infty}(R(z)-(1+b)\log|z|)=+\infty.$\\
(5) (Hypothesis 7.1) For some $ a>0 , \int_Yd\tau /|z|^a<+\infty.$

A key step is theorem 5.1 where we prove a result on the asymptotics of normalizing constants, proved in \cite{BG} for subsets of $\RR$ and used previously as an assumption in \cite{AGZ}. Our methods use polynomial estimates and potential theory but not the large deviation principle for the normalized counting measures of a random point. We will specifically use weighted potential theory (see \cite{ST}), since the probability distributions given by (\ref{i1}) are, in each variable, a power of the absolute value of a  weighted polynomial (see section 2).

These results extend  a  result of \cite{BG} to appropriate  subsets of the plane and, in addition,  the measure $\tau$ used to define the ensemble can be more general than Lebesgue measure: it need only satisfy the Bernstein-Markov condition given by Hypothesis 3.9.
 The rate function is independent of $\tau$ as long as $\tau$ satisfies Hypothesis 3.9.

We let $C,c$ denote positive constants which may vary from line to line. All measures are positive Borel measures.

\section{polynomial estimates}
We will list some basic results of weighted potential theory (see \cite{ST}).
 Let $Y$ be a  closed set in the plane and $R$ be a continuous (real-valued) function on $Y$.
If $Y$ is unbounded , $R$ is assumed  to be superlogarithmic i.e., for some $b>0$
\begin{equation}\label{p0}\lim_{|z|\to\infty}(R(z)-(1+b)\log|z|)=+\infty.\end{equation}

For $r>0$ we let $Y_r:=\{z\in Y||z|\leq r\}.$ 
We will assume that $Y_r$ is a regular set (in the sense of potential theory) for all $r$ sufficiently large. We recall that a compact  set is regular if it is regular for the exterior Dirichlet problem \cite{R} or equivalently if the unweighted Green function (i.e. $R\equiv 0$ in (\ref{p2}) below)
is continuous on $\CC.$

The weighted Green function of $Y$ with respect to $R$ (\cite{ST}, Appendix B)  is denoted by $V_{Y,R}$. It is defined by
\begin{equation}\label{p2} V_{Y,R}(z)=\sup\{u(z)|u\text{ is subharmonic on }\CC,u \leq R \text{ on }  Y\text{and, }\end{equation} $$u\leq \log^+|z| +C, \text{where } C \text{ is a constant depending on } u\}.$$
 
$V_{Y_r,R}$ is continuous for $r$ sufficiently large (\cite{S}, prop 2.16) since $Y_r$ was assumed to be regular for $r$ sufficiently large  and any regular compact set in the plane is locally regular (\cite{R}). 
Thus $V_{Y,R}$ is continuous since $V_{Y,R}=V_{Y_r,R}$ for $ r$ sufficiently large (\cite{ST}, Appendix B, lemma 2.2).

The \emph{logarithmic capacity} $c(E)$ of a Borel subset $E\subset\CC$ is defined by

$$\log c(E)=\sup_{\nu}\{\int\int\log|z-t|d\nu(z)d\nu(t)\}$$ over all Borel probability measures $\nu$ with compact support in $E.$

 Regular sets are of positive capacity and so $Y$ is of positive capacity. The function $e^{-R}$ is thus strictly positive on a set of positive capacity and so is  an admissible weight in the terminology of \cite{ST}.  Thus Chapter  I, Theorem 1.3 of \cite{ST} holds.

 The weighted equilibrium measure is denoted $\mu_{Y,R}$. It has compact support and  is the unique minimizer of
 \begin{equation}\label{p1}E(\nu )=-\int\int\log|z-t|d\nu (z)d\nu (t)+2\int R(z)d\nu (z)\end{equation}
over all measures $\nu \in \mathcal{M}(Y)$ where $\mathcal{M}(Y)$ denotes the probability measures on $Y$(\cite{ST}, Chapter  I, Theorem 1.3).

 Now, by (\cite{ST}, Appendix B, Lemma 2.4)
\begin{equation}\label{p3}V_{Y,R}(z)=\int_Y\log|z-t|d\mu_{Y,R}(t)+\rho\end{equation}
where $\rho$ is the Robin constant given by
\begin{equation}\label{p4}\rho=\lim_{|z|\rightarrow\infty}(V_{Y,R}(z)-\log|z|).\end{equation}
It is also known that $E(\mu_{Y,R})>-\infty$ and (\cite{ST}, Chapter  I, Theorem 3(d)) $$\rho=E(\mu_{Y,R})-\int_Y R(z)\mu_{Y,R}.$$

We let $S_R^*:=\{z\in Y|V_{Y,R}= R\}$ and $S_R:=\text{supp}(\mu_{Y,R})$.  $S_R$ and $S_R^*$ are compact and  in general $S_R\subset S_R^*$ (\cite{ST}, Chapter  I, Theorem 1.3(e)).   $S_R$ and $S_R^*$ depend on $Y$ although  our notation does not explicitly indicate this. However, for $r$ sufficiently large $S_R$ and  $S_R^*$ are independent of $Y_r.$

From (\ref{p3}) we have (\cite{ST}, Chapter  I,  Theorem 1.3(f)) \begin{equation}\label{p5}R(z)=\int_Y\log|z-t|d\mu_{Y,R}(t) +\rho\end{equation}for $z\in S_R.$\\

   We let $\mathcal{P}_n$ denote the space of  polynomials in the single variable  $z$ of degree $ \leq n$ and for $p\in\mathcal{P}_n$ we refer to $e^{  -nR(z)}p(z)$ as a \textit{weighted polynomial of degree} $n$ (the weight is the positive continuous function $ e^ { -R(z)}).$

 Now for $\beta >0$ we set \begin{equation}\label{pR}R(z)=\frac{2}{\beta}Q(z).\end{equation}
Note that $A_{n,\beta ,Q}(z)$ is, in each variable, of the form$$|e^{-nR(z)}p(z)|^{\beta}=|e^{-2nQ}p(z)|^{\beta}$$  and is thus  the absolute value of a weighted polynomial to the $\beta$ power. For this reason weighted potential theory can be used in the study of $\beta$ ensembles.\\

Equations (\ref{p51}) and (\ref{p52}) below are  known estimates for weighted polynomials.\\

By  (\cite{ST}, Chapter III,  Corollary 2.6)   the sup norm of a weighted polynomial is assumed on $S_R$. That is 
\begin{equation}\label{p51} ||e^{-nR}p(z)||_Y=||e^{-nR(z)}p(z)||_{S_R}\end{equation}\\
for all $p\in\mathcal{P}_n.$\\

By (\cite{ST}, Chapter I, Theorem 3.6)  we have, for $p$ a monic polynomial of degree $n$
 \begin{equation}\label{p52}||e^{-nR(z)}p(z)||_{S_R}\geq e^{-n\rho }.\end{equation}\\

For $G$ a subset of $\mathcal{M}(Y)$ we will use the notation $$\tilde{G}_n=\{z=(z_1,\ldots,z_n)\in Y^n\Big{|}\frac{1}{n}\sum_{j=1}^n\delta (z_j)\in G\}.$$
Here $\delta$ denotes the Dirac delta measure.\\

We now restrict to the case that $Y$ is compact and we will use the notation $K$ in place of $Y.$
\begin{lemma}\label{lemma1}
Given $\epsilon >0$, there is a neighbourhood $G$ of $\mu_{K,R}$ in $\mathcal{M}(K)$ (with the weak* topology)  such that for $(z_1,\ldots ,z_n)\in \tilde{G}_n$ we have $$||e^{-nR(t)}(t-z_1)\ldots  (t-z_n)||_K \leq e^{-n(\rho -\epsilon )} .$$\end{lemma}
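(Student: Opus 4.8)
The plan is to reduce the sup-norm over $K$ to a sup over $S_R$, rewrite the logarithm of the weighted polynomial as an integral against the empirical measure $\mu_n:=\tfrac1n\sum_{j=1}^n\delta(z_j)$, and then exploit the weak* closeness of $\mu_n$ to $\mu_{K,R}$ guaranteed by $\mu_n\in G$. First I would apply (\ref{p51}) to the monic polynomial $p(t)=\prod_{j=1}^n(t-z_j)\in\mathcal P_n$, which reduces the claim to the bound $|e^{-nR(t)}p(t)|\le e^{-n(\rho-\epsilon)}$ for $t\in S_R$ only. Taking logarithms, dividing by $n$, and using (\ref{p5}) to substitute $R(t)=\int_K\log|t-s|\,d\mu_{K,R}(s)+\rho$ on $S_R$, the target inequality becomes
$$\int_K\log|t-s|\,d\mu_n(s)-\int_K\log|t-s|\,d\mu_{K,R}(s)\le\epsilon,$$
required to hold for all $t\in S_R$ simultaneously, with $G$ chosen independently of $(z_1,\dots,z_n)$.

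The obstruction is that $s\mapsto\log|t-s|$ is unbounded below, so weak* convergence cannot be invoked directly; moreover I need uniformity in $t\in S_R$ from a single neighbourhood $G$. I would handle the singularity by truncation: for $M>0$ set $f_M(t,s)=\max(\log|t-s|,-M)$, which is jointly continuous and bounded on $S_R\times K$ and satisfies $\log|t-s|\le f_M(t,s)$. Since $E(\mu_{K,R})>-\infty$ the measure $\mu_{K,R}$ carries no atoms, so by monotone convergence $h_M(t):=\int_K f_M(t,s)\,d\mu_{K,R}(s)$ decreases, as $M\to\infty$, to $\int_K\log|t-s|\,d\mu_{K,R}(s)=R(t)-\rho$, which is continuous on $S_R$ because $R$ is. Each $h_M$ is continuous on the compact set $S_R$, so Dini's theorem yields uniform convergence; hence I may fix $M$ so that $h_M(t)\le\int_K\log|t-s|\,d\mu_{K,R}(s)+\epsilon/2$ for every $t\in S_R$.

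With $M$ now fixed, the family $\{f_M(t,\cdot):t\in S_R\}$ is uniformly bounded and equicontinuous (by uniform continuity of $f_M$ on $S_R\times K$), hence totally bounded in $C(K)$. Choosing a finite $\epsilon/6$-net $t_1,\dots,t_m$ for it in sup-norm, I define
$$G=\Big\{\nu\in\mathcal M(K):\Big|\int_K f_M(t_i,s)\,d\nu(s)-\int_K f_M(t_i,s)\,d\mu_{K,R}(s)\Big|<\epsilon/6,\ i=1,\dots,m\Big\},$$
a weak* neighbourhood of $\mu_{K,R}$. For any $t\in S_R$, approximating $f_M(t,\cdot)$ by the nearest net element on both $\mu_n$ and $\mu_{K,R}$ (using that both are probability measures) gives the three-term bound $\int_K f_M(t,s)\,d\mu_n(s)\le\int_K f_M(t,s)\,d\mu_{K,R}(s)+\epsilon/2$ for every $t\in S_R$ and every $\mu_n\in G$.

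Finally I would combine the pieces: for $\mu_n\in G$ and $t\in S_R$,
$$\int_K\log|t-s|\,d\mu_n(s)\le\int_K f_M(t,s)\,d\mu_n(s)\le h_M(t)+\frac{\epsilon}{2}\le\int_K\log|t-s|\,d\mu_{K,R}(s)+\epsilon,$$
which is exactly the displayed estimate; tracing back through (\ref{p5}) and (\ref{p51}) then delivers $\|e^{-nR(t)}(t-z_1)\cdots(t-z_n)\|_K\le e^{-n(\rho-\epsilon)}$. The crux of the argument is obtaining uniformity in $t$: this is precisely what the Dini step supplies for the truncation error and what the equicontinuity/finite-net step supplies for the weak* comparison, and I expect the latter bookkeeping to be the main technical point rather than the former.
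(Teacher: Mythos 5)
Your proof is correct, but it takes a genuinely different route from the paper's. The paper argues by contradiction: if no neighbourhood $G$ worked, there would be a sequence of $n_s$-tuples whose empirical measures converge weak* to $\mu_{K,R}$ while the weighted sup-norms on $S_R$ stay above $e^{-n_s(\rho-\epsilon)}$; the pointwise bound $\limsup_s \frac{1}{n_s}\sum_j \log|t-z_j^s| \le \int \log|t-\xi|\,d\mu_{K,R}(\xi)$ (upper semicontinuity of the kernel plus weak* convergence, via \cite{ST}, Chapter 0, Theorem 1.4) is then upgraded to a bound \emph{uniform} on $S_R$ by Hartogs' lemma applied to the subharmonic functions $t\mapsto \frac{1}{n_s}\sum_j\log|t-z_j^s|$, with (\ref{p5}) identifying the majorant as $R(t)-\rho$, contradicting the assumed lower bound. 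You instead give a direct, quantitative construction: the same reduction via (\ref{p51}) and (\ref{p5}), then truncation $f_M=\max(\log|t-s|,-M)$, Dini's theorem to make the truncation error uniform over $t\in S_R$, and an equicontinuity/finite-net argument to produce an explicit weak* neighbourhood $G$ defined by finitely many test integrals. In effect you reprove by hand exactly the uniformity that the paper outsources to Hartogs' lemma --- truncation plus Dini is essentially the standard proof of that semicontinuity statement --- which makes your argument more elementary (only joint continuity of the truncated kernel is needed, no subharmonicity) and constructive (an explicit $G$, with uniformity in $n$ manifest), at the cost of more bookkeeping; the paper's version is shorter given the cited machinery. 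One small correction: atomlessness of $\mu_{K,R}$ is not what licenses the monotone convergence $h_M \downarrow \int\log|t-s|\,d\mu_{K,R}(s)$ --- that holds for any measure since $f_M$ decreases pointwise to the log kernel --- and what your Dini step actually requires is that the limit be finite and continuous on $S_R$, which is supplied by (\ref{p5}) (the limit equals $R(t)-\rho$ there), as you indeed invoke; so the remark deducing no atoms from $E(\mu_{K,R})>-\infty$ is harmless but superfluous.
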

\begin{proof}
The proof will be by contradiction. If not, for some $\epsilon >0$, no such $G$ exists. Thus there exists a sequence of $n_s$-tuples $(z^s_1,\ldots ,z_{n_s}^s)$ for $s=1,2,\ldots$ with
 \begin{equation}\label{p6}\lim_s\frac{1}{n_s}\sum_{j=1}^{n_s}\delta (z_j^s)=\mu_{K,R}\end{equation}
weak*, but, using (\ref{p51}),

 \begin{equation}\label{p7}||e^{-n_sR(t)}(t-z_1^s)\ldots (t-z_{n_s}^s)||_{S_R}\geq e^{-n_s\rho}e^{n_s\epsilon}.\end{equation}
Taking logarithms this may be rewritten as:
\begin{equation}\label{p8}\Big\|-R(t)+\frac{1}{n_s}\sum_{j=1}^{n_s}\log |t-z_j^s|\Big\|_{S_R}\geq -\rho +\epsilon.\end{equation} 
 It follows from (\ref{p6}) that, using  (\cite{ST}, Chapter 0, Theorem 1.4), we have
\begin{equation}\label{p9}\limsup_s \frac{1}{n_s}\sum_{j=1}^{n_s}\log |t-z_j^s|\leq\int\log |t-\xi |d\mu_{K,R}(\xi),\end{equation}
since $\xi\rightarrow \log |t-\xi |$ is uppersemicontinuous. Now, (\ref{p9}) is a \emph{pointwise} $\limsup$ and to obtain a contradiction  we will require a uniform $\limsup$  for $t\in S_R$. 

Since  $\xi\rightarrow \log |t-\xi |$ is subharmonic,
by Hartogs' lemma, (\cite{K}, Theorem 2.6.4) and (\ref{p5}) we have 
 \begin{equation}\label{p10}\limsup_s \frac{1}{n_s}\sum_{j=1}^{n_s}\log |t-z_j^s|\leq R(t)-\rho +\epsilon/2,\end{equation} uniformly on $S_R$. That is, for $s$ sufficiently large,
 \begin{equation}\label{p11}\Big\|-R(t)+ \frac{1}{n_s}\sum_{j=1}^{n_s}\log |t-z_j^s|\Big\|_{S_R}\leq -\rho +\epsilon/2,\end{equation} which contradicts (\ref{p8}).

\end{proof}
The following is a corollary to the proof of Lemma \ref{lemma1} and will be used in the proof of Theorem 6.2.
\begin{corollary}
Given $\epsilon >0$, there is a neighborhood $G$ of $\mu_{K,R}$ in $\mathcal{M}(K)$ (with the weak* topology)  such that for $(z_2,\ldots ,z_n)\in \tilde{G}_{n-1}$ and $w\in K$ then
 \begin{equation}\label{p111}(\frac{1 }{n-1}\sum_{j=2}^n\log |w-z_j|-\int\log|w-t|d\mu_{K,R}(t))\leq\epsilon.\end{equation}
\end{corollary}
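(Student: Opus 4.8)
The plan is to rerun the contradiction argument of Lemma \ref{lemma1}, now tracking the full family of $w\in K$ rather than only the supremum over $S_R$. Write $u(w):=\int\log|w-t|\,d\mu_{K,R}(t)$; by (\ref{p3}) this equals $V_{K,R}(w)-\rho$, which is continuous on $\CC$ because $K$ is regular (Section 2), and in particular finite, so $\mu_{K,R}$ carries no atoms. Suppose the assertion fails for some $\epsilon>0$. Fixing a countable neighbourhood basis $G_s$ of $\mu_{K,R}$, I would extract configurations $(z_2^s,\ldots,z_{n_s}^s)$ whose empirical measures $\nu_s:=\frac{1}{n_s-1}\sum_{j=2}^{n_s}\delta(z_j^s)$ lie in $G_s$, together with points $w_s\in K$, such that
\begin{equation*}
\frac{1}{n_s-1}\sum_{j=2}^{n_s}\log|w_s-z_j^s|-u(w_s)>\epsilon .
\end{equation*}
Since $\nu_s\in G_s$ and the $G_s$ shrink to $\mu_{K,R}$, we have $\nu_s\to\mu_{K,R}$ weak$^*$ (and, as $\mu_{K,R}$ is non-atomic, necessarily $n_s\to\infty$).

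Next I would consider the potentials $u_s(w):=\int\log|w-\xi|\,d\nu_s(\xi)=\frac{1}{n_s-1}\sum_{j=2}^{n_s}\log|w-z_j^s|$. Each $u_s$ is subharmonic on $\CC$, and the family is uniformly bounded above on $K$ by $\sup_{w,\xi\in K}\log|w-\xi|<\infty$. Exactly as in the derivation of (\ref{p9}), the upper semicontinuity of $\xi\mapsto\log|w-\xi|$ together with (\cite{ST}, Chapter 0, Theorem 1.4) gives the \emph{pointwise} bound $\limsup_s u_s(w)\le u(w)$ for every $w\in K$.

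The crux --- the same uniformization flagged in Lemma \ref{lemma1} --- is to upgrade this pointwise $\limsup$ to one that is uniform in $w$. Since the $u_s$ are subharmonic, locally uniformly bounded above, and dominated in the $\limsup$ sense by the \emph{continuous} function $u$, Hartogs' lemma (\cite{K}, Theorem 2.6.4) yields $u_s(w)\le u(w)+\epsilon$ for all $w\in K$ once $s$ is large. Evaluating at $w=w_s$ contradicts the displayed strict inequality, which proves the corollary. I expect this uniformization to be the only delicate step; note that, in contrast to Lemma \ref{lemma1}, no reduction to $S_R$ via (\ref{p51}) is needed here, precisely because the comparison function $V_{K,R}-\rho$ is continuous on all of $K$.
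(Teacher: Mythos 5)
Your proposal is correct and follows essentially the same route as the paper, which presents the statement explicitly as ``a corollary to the proof of Lemma \ref{lemma1}'': you rerun that lemma's contradiction argument verbatim, with the single (correct) modification of replacing the reduction to $S_R$ via (\ref{p5}) by the continuous majorant $u=V_{K,R}-\rho$ from (\ref{p3}), valid on all of $K$ by regularity. One cosmetic point: Hartogs' lemma (\cite{K}, Theorem 2.6.4) needs the pointwise bound $\limsup_s u_s\le u$ on an open neighbourhood of $K$, but your derivation of that bound works for every $w\in\CC$, not just $w\in K$, so nothing is missing.
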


\section{Bernstein-Markov inequalities}
 
Although we ultimately will only need Bernstein-Markov inequalities for subsets of $\CC$ we will consider these inequalities in $\CC^N$ as the $\CC^2$ case is used in the proofs.
Recall that all measures are positive Borel measures.
\begin{definition}
Let $\tau$ be a  measure  on a compact set $K\subset\CC^N.$  We say $\tau$ satisfies the  Bernstein-Markov (BM) inequality   if, for all $\epsilon >0$, there exists $C>0$ (independent of $n,p$) such that
 \begin{equation}\label{bm1}||p||_K\leq C(1+\epsilon)^n\int_K |p|d\tau,\end{equation}
for all $p\in\mathcal{P}_n^N, $ where here $\mathcal{P}_n^N$ denotes the holomorphic polynomials of total degree $\leq n$ on $\CC^N$. \end{definition}.
\begin{definition}
Let $\tau$ be a  measure  on a compact set $K\subset\CC^N$ and $R$ a real-valued continuous function on $K.$ We say $\tau$ satisfies the weighted Bernstein-Markov (BM) inequality for the weight  $e^{-R}$ if, for all $\epsilon >0$, there exists $C>0$ (independent of $n,p$) such that
 \begin{equation}\label{bm2}||e^{-nR}p||_K\leq C(1+\epsilon)^n\int_Ke^{-nR}|p|d\tau,\end{equation}
for all $p\in\mathcal{P}_n^N.$\end{definition}
We say that $\tau$ satisfies a$\emph{ strong BM inequality}$ if it satisfies the weighted BM inequality for all weights $e^{-R}.$\\

It  is known (\cite{StT}, proof of Theorem 3.4.3 ) that if $\tau$ satisfies (\ref{bm1})  or (\ref{bm2})  then it also satisfies an $ L^{\beta}$ version of that inequality (with, possibly,  a different constant $C$). For example, from (\ref{bm2})
\begin{equation}\label{bm3}||e^{-nR}p||_K\leq C(1+\epsilon )^n(\int_K(e^{-nR}|p|)^{\beta}d\tau)^{\frac{1}{\beta}}.\end{equation}
 
 The following lemma will be used in the proof of Theorem 5.1.
 \begin{lemma}Let $K\subset\CC$ be regular, $R$ a continuous real-valued continuous function on $K$  and $\tau$ be a  measure on $K$ satisfying the weighted BM inequality for the  weight $e^{-R}.$ Then for all monic polynomials of degree $n$ \begin{equation}\label{p14}\int_K|e^{-2nQ}p^{\beta}|d\tau\geq C(1+\epsilon )^{-n\beta }e^{-n \rho\beta}.\end{equation}\begin{proof}
Recall that $R=2Q/{\beta}$ and combine  (\ref{bm2})  with (\ref{p51})  and (\ref{p52}). 
\end{proof}
\end{lemma}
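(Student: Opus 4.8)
The plan is to chain the $L^\beta$ form of the weighted Bernstein-Markov inequality together with the two weighted-polynomial sup-norm estimates (\ref{p51}) and (\ref{p52}) recorded in Section 2. The starting point is the identity $e^{-2nQ}=e^{-n\beta R}$, immediate from $R=2Q/\beta$; it lets me rewrite the integrand as $|e^{-2nQ}p^\beta|=(e^{-nR}|p|)^\beta$, so that the left-hand side of (\ref{p14}) is exactly the quantity occurring on the right of the $L^\beta$ inequality (\ref{bm3}). I also note that since $K$ is regular it has positive capacity, so the weight $e^{-R}$ is admissible and the objects $\mu_{K,R}$, $S_R$, and $\rho$ entering (\ref{p51})--(\ref{p52}) are well defined.

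First I would apply (\ref{bm3}) to bound the sup norm above by the $L^\beta$ integral,
$$\|e^{-nR}p\|_K\leq C(1+\epsilon)^n\Big(\int_K(e^{-nR}|p|)^\beta\,d\tau\Big)^{1/\beta}.$$
Next, because $Y=K$ in the compact setting, (\ref{p51}) gives $\|e^{-nR}p\|_K=\|e^{-nR}p\|_{S_R}$, and (\ref{p52})---applicable since $p$ is monic of degree $n$---bounds the latter below by $e^{-n\rho}$. Stringing these together yields
$$e^{-n\rho}\leq C(1+\epsilon)^n\Big(\int_K(e^{-nR}|p|)^\beta\,d\tau\Big)^{1/\beta}.$$

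Finally I would solve for the integral and raise both sides to the power $\beta$, absorbing the factor $C^{-\beta}$ into a new constant $C$ in accordance with the paper's convention that $C$ varies from line to line; this gives $\int_K(e^{-nR}|p|)^\beta\,d\tau\geq C(1+\epsilon)^{-n\beta}e^{-n\rho\beta}$, which is (\ref{p14}). There is essentially no analytic obstacle here: the lemma is a purely formal combination of inequalities already available. The only points demanding a little care are tracking how the constants and exponents transform under inversion and the $\beta$-th power (so that $(1+\epsilon)^n$ becomes $(1+\epsilon)^{-n\beta}$), and observing that although (\ref{p51}) is stated for the ambient set $Y$, its use here is legitimate precisely because $Y$ is taken to be the compact set $K$.
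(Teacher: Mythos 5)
Your proposal is correct and is exactly the paper's argument, merely written out in full: the paper's one-line proof ("combine (\ref{bm2}) with (\ref{p51}) and (\ref{p52})") implicitly routes through the $L^{\beta}$ form (\ref{bm3}), precisely as you do, and your bookkeeping of the constant and the exponents under inversion and the $\beta$-th power is accurate.
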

 
Conditions on a measure  to satisfy the BM inequality for compact sets $K\subset \CC$ are extensively studied in \cite{StT}. The measures  termed \emph{regular} in \cite{StT} coincide with measures satisfying the BM inequality on  regular compact sets but 
 the BM inequality is a more stringent condition in general (see \cite{StT},  example 3.5.3).  
\begin{remark}Suppose that $\tau$ satisfies the weighted  BM inequality for $e^{-R}$ on $S_R.$ Then it follows from (\ref{bm2}) and    (\ref{p51})  that $\tau$ satisfies the weighted BM inequality for $e^{-R}$ on any compact set $K\supset S_R.$ \end{remark}
Hence any  measure  $\tau$ which satisfies the weighted BM inequality for $e^{-R}$ on $S_R$  will satisfy Hypothesis 3.9 below i.e.
$\tau$ satisfies the weighted BM inequality on any compact neighbourhood of $S_R.$  Now for \emph{every} compact set $K\subset\CC$ there exists a discrete  measure ( i.e. a  countable linear combination of Dirac $\delta$ measures ) on $K$ which satisfies the strong BM inequality (see \cite{BL}, Corollary 3.8 - the construction given there is, in fact, valid for every compact set). Applying this construction to $S_R$  gives a discrete measure which satisfies Hypothesis 3.9.
We will show that Lebesgue measure on $\RR$ or $\CC$ also satisfies Hypothesis 3.9.
 
The next proposition gives a convenient  sufficient (but by no means necessary)  condition for a measure $\tau$ to satisfy the BM inequality on a regular compact set $K\subset\CC^N$ (for the definition of regular compact set in $\CC^N,N>1$ see (\cite{K}, Chapter 5).
\begin{proposition} 
  ( \cite{BL2}, Theorem 2.2)   Let $K\subset\CC^N$ be compact and regular. Let $\tau$ be a measure on $K$ such that  
for some $T>0$ and each $z_0\in K$ there exists $r(z_0)>0$ such that
\begin{equation}\label{bm4}\tau\{B(z_0,r)\}\geq r^T\text{ for }  r\leq r(z_0)\end{equation}
where $B(z_0,r)$ denotes the ball centre $z_0$, radius $r.$ Then $\tau$ satisfies the BM inequality on $K.$\end{proposition}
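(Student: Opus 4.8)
The plan is to deduce the Bernstein--Markov inequality (\ref{bm1}) from a purely local lower bound on $\int_K|p|\,d\tau$ near the point at which the supremum $\|p\|_K$ is attained. The two ingredients are (a) the Bernstein--Walsh inequality, which lets me leave $K$ by a small amount while losing only a factor that approaches $1$ at geometric rate in $n$, and (b) a Cauchy (derivative) estimate, which produces a ball on which $|p|$ stays comparable to $\|p\|_K$; the mass-density hypothesis (\ref{bm4}) then forces that ball to carry enough $\tau$-mass.

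Concretely, fix $p\in\mathcal{P}_n^N$, put $M=\|p\|_K$, and choose $z^*\in K$ with $|p(z^*)|=M$. Let $V_K$ denote the (unweighted) extremal/Green function of $K$; since $K$ is regular, $V_K$ is continuous and vanishes on $K$ (\cite{K}, Chapter 5), so $\delta_r:=\sup\{V_K(z):\operatorname{dist}(z,K)\le 2r\}$ satisfies $\delta_r\to 0$ as $r\to 0^+$. By the Bernstein--Walsh inequality $|p(z)|\le M\,e^{nV_K(z)}$ (\cite{K}), and hence $\|p\|_{B(z^*,2r)}\le M\,e^{n\delta_r}$. A Cauchy estimate on the ball then gives $\|\nabla p\|_{B(z^*,r)}\le (C_N/r)\,M\,e^{n\delta_r}$, so that $|p(z)|\ge M/2$ for every $z$ in the ball $B(z^*,s)$ of radius $s:=\tfrac{r}{2C_N}\,e^{-n\delta_r}$.

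To finish I would apply the hypothesis at the point $z^*\in K$ with radius $s$: assuming $s\le r(z^*)$, this gives $\tau(B(z^*,s))\ge s^T$, whence $\int_K|p|\,d\tau\ge \tfrac{M}{2}\,\tau(B(z^*,s))\ge \tfrac{M}{2}\,(\tfrac{r}{2C_N})^T e^{-n\delta_r T}$. Rearranging yields $\|p\|_K\le C\,e^{n\delta_r T}\int_K|p|\,d\tau$ with $C:=2(2C_N/r)^T$ independent of $n$ and $p$. Given $\epsilon>0$, I would then fix $r$ small enough that $e^{\delta_r T}\le 1+\epsilon$ (possible since $\delta_r\to 0$), which is exactly (\ref{bm1}).

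The main obstacle is not the pluripotential input, which is standard, but the bookkeeping in the last step: the radius $s$ shrinks with $n$ like $e^{-n\delta_r}$, while the hypothesis supplies only a \emph{pointwise} radius $r(z_0)$, and the extremal point $z^*$ depends on $p$ and ranges over all of $K$. I would handle this by first upgrading (\ref{bm4}) to a uniform scale: using the lower semicontinuity of $z_0\mapsto\tau(B(z_0,\rho))$ together with compactness of $K$ to produce $\rho_0>0$ and $c_0>0$ with $\tau(B(z_0,\rho))\ge c_0\rho^T$ for all $z_0\in K$ and $\rho\le\rho_0$; this affects only the constant $C$, not the base $1+\epsilon$. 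The remaining finitely many low degrees are harmless, since for each fixed $n$ the ratio $\|p\|_K/\int_K|p|\,d\tau$ is bounded above: $K=\operatorname{supp}\tau$ is regular, hence nonpolar and polynomial-determining, so $\int_K|p|\,d\tau$ cannot vanish on the sphere $\{\|p\|_K=1\}$ in the finite-dimensional space $\mathcal{P}_n^N$.
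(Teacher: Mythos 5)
Your strategy --- Bernstein--Walsh plus a Cauchy estimate at the extremal point $z^*$, then the mass-density bound on the resulting small ball --- is the standard proof of the Bernstein--Markov property under a \emph{uniform} mass-density hypothesis, and those steps are all sound. The genuine gap is the final ``uniformization'' step on which everything hinges: the claim that lower semicontinuity of $z_0\mapsto\tau(B(z_0,\rho))$ together with compactness of $K$ upgrades the pointwise hypothesis (\ref{bm4}) to the existence of $\rho_0,c_0>0$ with $\tau(B(z_0,\rho))\geq c_0\rho^T$ for all $z_0\in K$ and all $\rho\leq\rho_0$. What lower semicontinuity and compactness actually give is, for each \emph{fixed} $\rho>0$, that $m(\rho):=\inf_{z_0\in K}\tau(B(z_0,\rho))>0$; they give no control whatsoever on the rate at which $m(\rho)$ decays as $\rho\to 0$, and the quantifier order in (\ref{bm4}) ($\forall z_0\ \exists r(z_0)$) cannot supply it. The upgrade is in fact false, even if you allow a worse exponent $T'$: take $K=[0,1]$ and $d\tau=w\,dx$ with $w\equiv 1$ off a sequence of disjoint intervals $G_k$ of length $\ell_k\to 0$ (say accumulating at $0$) and $w\equiv\epsilon_k:=e^{-1/\ell_k}$ on $G_k$. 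One checks that (\ref{bm4}) holds with $T=2$ at \emph{every} point --- at the centre $c_k$ of $G_k$ one may take $r(c_k)=\min(2\epsilon_k,\ \mathrm{dist}(c_k,\partial G_k))>0$ --- yet $\tau(B(c_k,\ell_k/2))=\ell_k e^{-1/\ell_k}$, so $m(\ell_k/2)$ decays faster than any power of the radius and no bound $c_0\rho^{T'}$ holds uniformly. Since your ball radius $s\sim r\,e^{-n\delta_r}$ necessarily shrinks exponentially in $n$ (one cannot take $\delta_r=0$) and its centre $z^*$ moves with $p$, this is exactly the regime where the pointwise radius $r(z^*)$ cannot be invoked; the fallback bound $\int_K|p|\,d\tau\geq\tfrac{M}{2}\,m(s)$ is useless when $m$ decays superpolynomially, and letting $r=r_n\to 0$ does not help since then $s_n\to 0$ anyway.

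For comparison: the paper does not prove this proposition at all --- it quotes it from \cite{BL2}, Theorem 2.2 --- and the merely pointwise radius $r(z_0)$ in (\ref{bm4}) is precisely the point of that result. The proof there does not run through a direct estimate at the ($p$-dependent) maximum point at scale $e^{-n\delta_r}$; it goes through the capacity-convergence machinery of that paper. Roughly, if the inequality fails one extracts polynomials $p_n$ with $\|p_n\|_K=1$ and $\int_K|p_n|\,d\tau$ exponentially small, considers the sets where $|p_n|$ is exponentially small (which exhaust $K$ in $\tau$-measure summably fast), and uses the mass-density condition to show these sets are dense at every scale near each fixed point of $K$, so that by regularity their extremal functions converge to $V_K$, contradicting Bernstein--Walsh at the maximum points. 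As written, your argument proves the proposition only under the stronger uniform hypothesis $\tau(B(z_0,\rho))\geq c_0\rho^T$ for all $z_0\in K$, $\rho\leq\rho_0$ (which does cover Lebesgue measure, the case the present paper mainly needs); to obtain (\ref{bm4}) as stated you need an argument of this subsequence/capacity type, not a pointwise-to-uniform upgrade.
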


We also have:
\begin{proposition}( \cite{BB}, Theorem 3.2) Let $ K\subset\RR^N\subset\CC^N.$ Then if $\tau$ satisfies the  BM inequality  on $K$ it satisfies the strong BM inequality  on $K.$

\end{proposition}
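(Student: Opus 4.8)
The plan is to reduce the weighted Bernstein--Markov inequality (\ref{bm2}) for an arbitrary continuous weight $e^{-R}$ to the unweighted inequality (\ref{bm1}) by approximating the weight itself by a polynomial. The only place the hypothesis $K\subset\RR^N$ enters is in this approximation: on a compact \emph{real} set the holomorphic polynomials restrict to the ordinary real polynomials in the coordinates, and these are dense in $C(K)$ by the Stone--Weierstrass theorem. For a general compact $K\subset\CC^N$ no such density holds (one cannot approximate $\bar z$ by holomorphic polynomials), which is the reason the implication is restricted to real $K$.

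So fix a continuous $R$ on $K$ and let $\epsilon>0$. Put $c=\min_K e^{-R}>0$. By Stone--Weierstrass choose a real polynomial $h$, of some degree $d$, with $\|e^{-R}-h\|_K<\eta$, where $\eta$ is to be fixed below; for $\eta<c$ this forces $h>0$ on $K$, and since $e^{-R}\ge c$ we obtain the pointwise sandwich
\begin{equation*}
(1-\eta/c)^{\,n}\,e^{-nR}\le h^{\,n}\le (1+\eta/c)^{\,n}\,e^{-nR}\qquad\text{on }K.
\end{equation*}
Regarded as a holomorphic polynomial on $\CC^N$, $h$ is frozen once $\eta$ is chosen, so its degree $d$ is a fixed constant from this point on.

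Now run the estimate in three steps. The left sandwich gives $\|e^{-nR}p\|_K\le (1-\eta/c)^{-n}\|h^{\,n}p\|_K$. Since $h^{\,n}p\in\mathcal{P}_{n(d+1)}^N$, the unweighted inequality (\ref{bm1}), applied with some $\epsilon'>0$, yields $\|h^{\,n}p\|_K\le C(1+\epsilon')^{\,n(d+1)}\int_K h^{\,n}|p|\,d\tau$ (here $h^{\,n}>0$ on $K$ lets us drop the absolute value). Finally the right sandwich gives $\int_K h^{\,n}|p|\,d\tau\le (1+\eta/c)^{\,n}\int_K e^{-nR}|p|\,d\tau$. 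Combining the three steps bounds $\|e^{-nR}p\|_K$ by $C$ times $\bigl[(1-\eta/c)^{-1}(1+\epsilon')^{\,d+1}(1+\eta/c)\bigr]^{n}\int_K e^{-nR}|p|\,d\tau$.

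It remains to make the bracketed base at most $1+\epsilon$, and this is the one point to watch. First choose $\eta<c$ small enough that $(1+\eta/c)/(1-\eta/c)\le 1+\epsilon/2$; this fixes $h$ and hence the degree $d$. Only then, with $d$ now a fixed integer, choose $\epsilon'$ small enough that $(1+\epsilon')^{d+1}\le (1+\epsilon)/(1+\epsilon/2)$, and let $C$ be the constant supplied by (\ref{bm1}) for this $\epsilon'$. With these choices the base is $\le 1+\epsilon$ and $C$ is independent of $n$ and $p$, which is exactly (\ref{bm2}). The main obstacle is precisely this bookkeeping: the unweighted constant in (\ref{bm1}) degrades as $\epsilon'\to0$, so one must \emph{freeze} the degree $d$ of the polynomial approximant of $e^{-R}$ before selecting $\epsilon'$, rather than the other way around.
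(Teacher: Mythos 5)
Your proof is correct. Note that the paper itself gives no argument for this proposition---it is stated purely by citation to \cite{BB}, Theorem 3.2---so there is no internal proof to compare against; your Stone--Weierstrass reduction is essentially the standard argument for real sets that underlies the cited result. The two places where such a proof can go wrong are both handled properly: first, the sandwich $(1-\eta/c)^n e^{-nR}\le h^n\le(1+\eta/c)^n e^{-nR}$ correctly converts additive uniform closeness $\|e^{-R}-h\|_K<\eta$ into the multiplicative closeness needed when raising to the $n$th power, and this conversion genuinely uses $e^{-R}\ge c>0$, available since $R$ is continuous on the compact $K$; second, you freeze the degree $d$ of $h$ \emph{before} choosing $\epsilon'$ in (\ref{bm1}), which is exactly the quantifier order forced by the fact that $h^np$ has degree $n(d+1)$ rather than $n$, so the unweighted inequality contributes $(1+\epsilon')^{n(d+1)}$ with a constant $C=C(\epsilon')$ that must not be rechosen afterwards. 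You also correctly isolate where $K\subset\RR^N$ enters: a real polynomial in the coordinates extends to a holomorphic polynomial on $\CC^N$ of the same degree, and real polynomials are dense in $C(K)$ by Stone--Weierstrass, whereas for general compact $K\subset\CC^N$ holomorphic polynomials are not dense in $C(K)$ (one cannot approximate $\bar z$), which is why the implication fails there and the proposition is restricted to real sets.
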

\begin{corollary}
 Let $K$ be the closure of its interior in $\CC$ and have $\mathcal{C}^1$ boundary. Then  Lebesgue planar measure satisfies the strong BM  inequality on  $K.$
\begin{proof}Consider $K\subset\CC\cong\RR^2\subset\CC^2.$ Now, $K$ is regular as a subset of $\CC^2$ (this follows, for example from the 
accessibility condition of Plesniak,\cite{K}, Chapter 5) and any measure satisfying a strong BM inequality on $K$ as a subset of $\CC^2$ satisfies a strong BM inequality on $K$ as a subset of $\CC.$ Thus the result follows from Propositions 3.5 and 3.6.\end{proof}
\end{corollary}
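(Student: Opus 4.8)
The plan is to realize $K \subset \CC$ inside $\CC^2$ via the natural embedding $\CC \cong \RR^2 \subset \CC^2$, where $\RR^2 = \{(x,y) : x,y \in \RR\}$ sits inside $\CC^2$ as a totally real subspace, and then transfer the known higher-dimensional results back down to $\CC$. The two facts I would chain together are Proposition 3.5 (the volume lower bound $\tau\{B(z_0,r)\} \geq r^T$ implies the BM inequality on a regular compact set in $\CC^N$) and Proposition 3.6 (on $\RR^N \subset \CC^N$, the BM inequality automatically upgrades to the \emph{strong} BM inequality, i.e. for all weights $e^{-R}$).

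First I would verify the hypotheses of Proposition 3.5 for $K$ viewed as a compact subset of $\CC^2$. Regularity in the $\CC^2$ sense is the first point: since $K$ is the closure of its interior (in $\CC \cong \RR^2$) and has $\mathcal{C}^1$ boundary, every boundary point is accessible in the sense required, so $K$ satisfies Plesniak's accessibility criterion and is therefore $L$-regular as a subset of $\CC^2$ (\cite{K}, Chapter 5). For the measure condition \eqref{bm4}, the relevant $\tau$ is planar Lebesgue measure on $K \subset \RR^2$. For any $z_0 \in K$, because $K$ is the closure of its interior with smooth boundary, a fixed positive fraction of each small planar disc $B(z_0,r) \cap \RR^2$ lies in $K$, so the two-dimensional Lebesgue measure of that intersection is bounded below by $c\, r^2$; choosing $T=2$ (and absorbing the constant, or shrinking $r(z_0)$) gives $\tau\{B(z_0,r)\} \geq r^T$ for $r \leq r(z_0)$. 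Hence by Proposition 3.5, $\tau$ satisfies the BM inequality on $K$ as a subset of $\CC^2$.

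Next, since $K \subset \RR^2 \subset \CC^2$, Proposition 3.6 applies and promotes this to the \emph{strong} BM inequality on $K$ as a subset of $\CC^2$: the inequality \eqref{bm2} holds for every continuous weight $e^{-R}$, with polynomials taken in $\mathcal{P}_n^2$. Finally I would descend back to $\CC$: a polynomial $p(z)$ in one complex variable of degree $\leq n$ is, upon restriction to the slice $K$, the restriction of a holomorphic polynomial in $\mathcal{P}_n^2$ (namely $p$ itself regarded as a function of the first coordinate), and the weight $e^{-R}$ and the measure $\tau$ are unchanged. Therefore the strong BM inequality valid for all of $\mathcal{P}_n^2$ restricts to the strong BM inequality for $\mathcal{P}_n$ on $K \subset \CC$, which is exactly the claim.

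The genuinely delicate step is the $\CC^2$-regularity of $K$: unlike the one-variable situation, where regularity is elementary for sets with nice boundary, $L$-regularity (pluripotential/Siciak–Zaharjuta regularity) in several variables is subtle, and one must invoke Plesniak's geometric accessibility condition to guarantee it. The measure bound \eqref{bm4} and the final slicing argument are routine once the embedding is fixed, so the crux is confirming that the $\mathcal{C}^1$-boundary, ``closure-of-interior'' hypotheses feed correctly into the pluripotential-theoretic regularity needed for Propositions 3.5 and 3.6.
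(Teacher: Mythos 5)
Your proposal is correct and follows essentially the same route as the paper's own proof: embed $K\subset\CC\cong\RR^2\subset\CC^2$, obtain $\CC^2$-regularity from Plesniak's accessibility condition, apply Proposition 3.5 (verifying the mass bound for planar Lebesgue measure) and then Proposition 3.6, and finally restrict the two-variable inequality back to one-variable polynomials --- the paper merely leaves the measure bound and the descent step implicit, which you spell out. One small repair: with $\tau\{B(z_0,r)\}\geq c\,r^2$ and $c<1$, the choice $T=2$ cannot be rescued by shrinking $r(z_0)$, since the ratio $c\,r^2/r^2=c$ is independent of $r$; instead take any fixed $T>2$, for which $c\,r^2\geq r^T$ holds for all sufficiently small $r$, and Proposition 3.5 applies exactly as you intend.
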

\begin{corollary}Let $K\subset\RR$ be a finite union of disjoint closed intervals.Then Lebesgue measure on $\RR$ satisfies the strong BM inequality on $K.$ 
\begin{proof}$K$ is regular as a subset of $\CC$ (\cite{StT})  so it follows from Propositions 3.5 and 3.6 that Lebesgue measure on $\RR$ satisfies the strong BM inequality on $K.$
\end{proof}
\end{corollary}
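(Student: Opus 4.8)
The plan is to obtain the strong BM inequality by the same two-step route used for Corollary 3.7: first establish the unweighted BM inequality through the mass-density criterion of Proposition 3.5, and then upgrade it to \emph{all} weights $e^{-R}$ via Proposition 3.6. Since here $K$ lies in the real line, I would work directly in dimension one, regarding $K\subset\RR\subset\CC$, rather than embedding in $\CC^2$ as in Corollary 3.7. This keeps $N=1$ throughout, avoids the accessibility argument, and makes both hypotheses of Proposition 3.5 transparent.

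First I would record that $K$ is regular as a compact subset of $\CC$. Writing $K=\bigcup_{i=1}^m I_i$ with the $I_i$ disjoint nondegenerate closed intervals, each $I_i$ is a line segment and hence regular (its complement carries a continuous Green function), and a finite union of regular compact sets is regular; this is the classical fact invoked from \cite{StT}. Thus the regularity hypothesis of Proposition 3.5 is satisfied. Next I would check the local mass lower bound (\ref{bm4}). Since $\tau$ is one-dimensional Lebesgue measure supported on $K\subset\RR$, for $z_0\in K$ the quantity $\tau\{B(z_0,r)\}$ is just the length of $B(z_0,r)\cap K$. If $z_0$ is interior to some $I_i$, then for $r$ small enough that $(z_0-r,z_0+r)\subset I_i$ we have $\tau\{B(z_0,r)\}=2r$, while if $z_0$ is one of the finitely many endpoints we have $\tau\{B(z_0,r)\}\geq r$ for $r$ small. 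In either case $\tau\{B(z_0,r)\}\geq r=r^T$ with $T=1$, uniformly for $r\leq r(z_0)$, so the density hypothesis of Proposition 3.5 holds with $T=1$.

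With these two points in hand, Proposition 3.5 yields that Lebesgue measure satisfies the (unweighted) BM inequality on $K$, and then, because $K\subset\RR\subset\CC$, Proposition 3.6 upgrades this to the strong BM inequality, completing the argument. The only place requiring any care is the density estimate at the interval endpoints, where a ball meets $K$ in a one-sided interval of length only $r$; but since there are finitely many such points and the exponent $T=1$ already accommodates them, there is in fact no serious obstacle.
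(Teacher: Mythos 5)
Your proof is correct and takes essentially the same route as the paper's own proof, which likewise invokes regularity of $K$ as a subset of $\CC$ together with Propositions 3.5 and 3.6 (working directly with $N=1$, so no $\CC^2$ embedding is needed). You merely make explicit the density check (\ref{bm4}) with $T=1$, including the one-sided estimate at interval endpoints, which the paper leaves implicit.
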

We will use the following hypothesis in the next lemma:
\begin{hypothesis}
The  measure $\tau$ satisfies the weighted BM inequality for $e^{-R}$ on any compact neighbourhood of $S_R^*.$\end{hypothesis}
\begin{remark}  Every compact set $K\subset\CC$ has a neighbourhood basis of compact sets which are the closure of their interiors and have $\mathcal{C}^1$ boundary. Thus by Corollary 3.7,  Lebesgue measure  on  $\CC$ satisfies Hypothesis 3.9. Similarly, since every compact set $K\subset\RR$ has a neighbourhood basis of compact sets which are a finite disjoint union of closed intervals,  so by Corollary 3.8,  Lebesgue measure on $\RR$ satisfies Hypothesis 3.9.    
\end{remark}

\begin{lemma} Let $K\subset\CC$ be regular, $R$ a  real-valued continuous function on $K$  and $\tau$ be a  measure on $K$ satisfying Hypothesis 3.9. Let $N$ be a compact neighbourhood of $S_R^*.$ Then there is a constant $c>0$ (independent of $n,p$) such that:$$\int_K|e^{-nR}p|^{\beta}d\tau\leq (1+O(e^{-nc}))\int_N|e^{-nR}p|^{\beta}d\tau$$
for all $p\in\mathcal{P}_n.$\end{lemma}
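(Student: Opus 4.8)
The plan is to show that the integral of $|e^{-nR}p|^\beta$ over $K$ is dominated by the contribution from a neighbourhood $N$ of $S_R^*$, with the complementary part $K \setminus N$ being exponentially small relative to the $N$-part. The key observation driving this is that, away from $S_R^*$, the weighted Green function satisfies $V_{K,R} < R$ strictly, while on $S_R^*$ we have $V_{K,R} = R$ by definition. First I would invoke the following sup-norm comparison, which is the heart of the matter: for a weighted polynomial $e^{-nR}p$ with $p \in \mathcal{P}_n$, the Bernstein–Walsh–type estimate gives $|e^{-nR(z)}p(z)| \le \|e^{-nR}p\|_K \, e^{-n(R(z) - V_{K,R}(z))}$ pointwise on $K$ (this is standard from the definition (\ref{p2}) of $V_{K,R}$ together with continuity of $V_{K,R}$, which holds since $K$ is regular). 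On the complement of a neighbourhood $N$ of $S_R^*$, the continuous function $R - V_{K,R}$ is bounded below by some $c_0 > 0$, so on $K \setminus N$ we obtain $|e^{-nR}p| \le \|e^{-nR}p\|_K \, e^{-nc_0}$.

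Next I would control $\int_{K\setminus N}|e^{-nR}p|^\beta\,d\tau$ from above. Using the pointwise bound just established and that $\tau$ is a finite measure on the compact set $K$, we get
\begin{equation}\label{pl1}
\int_{K\setminus N}|e^{-nR}p|^\beta\,d\tau \le \tau(K)\,\|e^{-nR}p\|_K^\beta\, e^{-n\beta c_0}.
\end{equation}
It remains to bound $\|e^{-nR}p\|_K$ from above by a constant times $(1+\epsilon)^n$ times the $N$-integral. This is precisely where Hypothesis 3.9 enters: shrinking $N$ if necessary so that $N$ is still a compact neighbourhood of $S_R^*$ on which $\tau$ satisfies the weighted BM inequality (we may assume $N$ itself is such a neighbourhood, since Hypothesis 3.9 gives the inequality on \emph{any} compact neighbourhood of $S_R^*$), the $L^\beta$ version (\ref{bm3}) of the weighted BM inequality yields
\begin{equation}\label{pl2}
\|e^{-nR}p\|_N \le C(1+\epsilon)^n\Big(\int_N |e^{-nR}p|^\beta\,d\tau\Big)^{1/\beta}.
\end{equation}
Since $S_R^* \supset S_R$ and the sup-norm of a weighted polynomial is attained on $S_R \subset N$ by (\ref{p51}), we have $\|e^{-nR}p\|_K = \|e^{-nR}p\|_{S_R} = \|e^{-nR}p\|_N$, so (\ref{pl2}) bounds $\|e^{-nR}p\|_K$ as well.

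Combining (\ref{pl1}) and (\ref{pl2}) and choosing $\epsilon$ small enough that $(1+\epsilon)^\beta e^{-\beta c_0} \le e^{-\beta c_1}$ for some $c_1 > 0$, I would obtain
$$\int_{K\setminus N}|e^{-nR}p|^\beta\,d\tau \le C^\beta\,\tau(K)\,e^{-n\beta c_1}\int_N|e^{-nR}p|^\beta\,d\tau.$$
Writing $\int_K = \int_N + \int_{K\setminus N}$ then gives $\int_K|e^{-nR}p|^\beta\,d\tau \le (1 + O(e^{-nc}))\int_N|e^{-nR}p|^\beta\,d\tau$ with $c = \beta c_1$, as desired. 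The main obstacle, and the step requiring the most care, is the interplay of quantifiers: the BM constant $C$ and the $\epsilon$ in (\ref{pl2}) must be chosen \emph{before} fixing how the geometric factor $e^{-nc_0}$ is absorbed, so I would first fix $N$ and hence $c_0 = \min_{K\setminus N}(R - V_{K,R}) > 0$, then choose $\epsilon$ small relative to $c_0$, and only then apply Hypothesis 3.9 on $N$ to extract $C$; this ordering ensures $c$ is a genuine positive constant independent of $n$ and $p$.
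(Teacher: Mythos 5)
Your proposal is correct and takes essentially the same approach as the paper's own proof: the same decomposition $\int_K=\int_N+\int_{K\setminus N}$, the same Bernstein--Walsh pointwise estimate $|e^{-nR}p|\leq \|e^{-nR}p\|_{S_R}\exp\bigl(n(V_{K,R}-R)\bigr)$ with a uniform gap $R-V_{K,R}\geq c_0>0$ off $N$ to make the complementary integral exponentially small, and the same use of the weighted Bernstein--Markov inequality (in its $L^{\beta}$ form (\ref{bm3})) on $N$ via Hypothesis 3.9 to lower-bound $\int_N$ against the sup norm, choosing $\epsilon$ small relative to the gap. The only difference is presentational: the paper normalizes $\|e^{-nR}p\|_{S_R}=1$ and splits the claim into (\ref{l11}) and (\ref{l12}), whereas you keep the norm explicit and form the ratio directly, which amounts to the same computation.
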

\begin{proof}
We first normalize the polynomial so that $||e^{-nR}p||_{S_R}=1.$ To prove the theorem it will suffice to show that
\begin{equation}\label{l11}\int_{K\setminus N}|e^{-nR(z)}p(z)|^{\beta}d\tau\leq Ce^{-nc},\end{equation} for some constants $C,c>0$ and
\begin{equation}\label{l12}\liminf_n \Big(\int_K|e^{-nR}p|^{\beta}d\tau \Big)^{1/n}\geq 1.\end{equation} 

We will use the estimate (\cite{ST}, Appendix B, Theorem 2.6 (ii)) 
\begin{equation}\label{l13}|e^{-nR(z)}p(z)|\leq||e^{-nR(z)}p(z)||_{S_R}
\exp(n(V_{K,R}(z)-R(z)) \end{equation}for $z\in K.$\\

Since $N$ is a compact neighbourhood of $S_R^*$, for $z\in K\setminus N$, and some constant $b>0$
\begin{equation}\label{l17}V_{K,R}(z)-R(z)\leq -b<0\end{equation}
so $|e^{-nR(z)}p(z)|\leq e^{-nb}$ for $z\in K\setminus N.$\\
Thus,\begin{equation}\label{l18}\int_{K\setminus N}|e^{-nR(z)}p(z)|^{\beta}d\tau\leq Ce^{-nb\beta }\end{equation}
for constants $C,b>0.$\\
Now,\begin{equation}\label{l19}\int_K|e^{-nR(z)}p(z)|^{\beta}d\tau\geq \int_N|e^{-nR(z)}p(z)|^{\beta}d\tau\end{equation}
and since $\tau$ satisfies the weighted BM condition on $N$ for $e^{-R}$  the right hand side in (\ref{l19}) is, for any $\epsilon >0$ and some $C>0$
\begin{equation}\label{l20}\geq ||e^{-nR}p||_{S_R}^{\beta}Ce^{-\epsilon n}=Ce^{-\epsilon n},\end{equation}
establishing (\ref{l12}) and the result.

\end{proof}
 Theorem 7.2  establishes  a version of this lemma for unbounded sets.

\section{Johansson large deviation}

We will not use the l.d.p. for the normalized counting measure of a random point but a weaker result whose utility was shown by Johansson \cite{J}.

Consider \begin{equation}\label{j1}A_{n,\beta ,Q}(z):=|D(z_1,\ldots ,z_n)|^{\beta }\exp (-2n[Q(z_1)+\ldots +Q(z_n)])\end{equation}where $\beta >0$,  $D$  denotes the Vandermonde determinant $$D(z_1,\ldots ,z_n)=\prod_{1\leq i<j\leq n}(z_i-z_j),$$
and $Q$ is a continuous, real-valued function on a regular compact set $K.$
Let $\tau$ satisfy the weighted BM inequality on $K$ for $e^{-R}$.
Let \begin{equation}\label{j2} Z_{n,\beta ,Q}:=\int_{K^n}A_{n,\beta,Q }(z)d\tau (z)\end{equation} where $$d\tau (z) =d\tau (z_1)\ldots d\tau (z_n).$$
We define a probability measure on $K^n$ for $n=1,2\ldots$ by\begin{equation}\label{j3}Prob_{n,\beta ,Q }=\frac{A_{n,\beta ,Q }(z)}{Z_{n,\beta ,Q }}d\tau(z).\end{equation}
 We obtain a collection of probability measures which we refer to as a $\beta$ ensemble on $K.$

We let, for $\nu\in\mathcal{M}(K)$\begin{equation}\label{j4}E_{\beta } (\nu )=-\beta /2\int\int\log|x-y|d\nu (x)d\nu(y) +2\int Q(x)d\nu(x).\end{equation}
Then\begin{equation}\label{j5}E_{\beta }(\nu )=\frac{\beta}{2}E(\nu ).\end{equation}
Thus the unique minimizer in $\mathcal{M}(K)$ of  $E_{\beta}(\nu)$ is $\mu_{K,R}$ for which we will also use the notation $\mu_{K,\beta ,Q}.$

The functional $E_{\beta}$ is a special case of the functionals $ E_Q(\mu )$ studied in \cite{BLW} and it is analogous to the functionals $E(\nu )$ and $E^Q(\nu)$ considered in (\cite{BA},equations (2.9) and (5.4)).The following three propositions are therefore special cases of results in \cite{BLW} and the proofs are also analogous to  proofs  in \cite{BA}.

\begin{proposition}
$$\lim_n\frac{1}{n^2}\log Z_{n,\beta ,Q }=-E_{\beta }(\mu_{K,\beta ,Q})=\lim_n\frac{1}{n^2}\log\sup_{K^n}A_{n,\beta ,Q}(z)$$
\begin{proof}See \cite{BLW},Proposition 4.15 and  Proposition 3.2 or \cite{BA}, Theorem 3.9 and Corollary 3.6.  
\end{proof}
\end{proposition}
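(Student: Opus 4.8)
The plan is to prove both equalities by sandwiching $\tfrac{1}{n^2}\log Z_{n,\beta,Q}$ between the two outer quantities, i.e. to establish three inequalities that close up. Throughout write $H(z)=|D(z)|\exp(-n\sum_{j}R(z_j))$, so that $A_{n,\beta,Q}=H^{\beta}$ (recall $2Q=\beta R$), and set $M_n:=\sup_{K^n}A_{n,\beta,Q}$. Since $K$ is regular it has positive capacity, hence is infinite, so $M_n>0$; and $A_{n,\beta,Q}$ is continuous on the compact $K^n$, so $M_n$ is attained at a (weighted Fekete) configuration $\xi^{(n)}=(\xi_1,\dots,\xi_n)$ whose coordinates must be distinct. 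The easy first inequality is $Z_{n,\beta,Q}\le \tau(K)^n M_n$, which gives $\tfrac1{n^2}\log Z_{n,\beta,Q}\le \tfrac1{n^2}\log M_n+\tfrac{\log\tau(K)}{n}$, and the last term vanishes; thus $\limsup_n\tfrac1{n^2}\log Z_{n,\beta,Q}\le \lim_n\tfrac1{n^2}\log M_n$ once that limit is shown to exist.

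Next I would identify $\lim_n\tfrac1{n^2}\log M_n$ with $-E_{\beta}(\mu_{K,\beta,Q})$. Associating to a configuration its empirical measure $\nu_n=\tfrac1n\sum_j\delta(z_j)$, one has
\[ \frac{1}{n^2}\log A_{n,\beta,Q}(z)=\frac{\beta}{2}\iint_{x\neq y}\log|x-y|\,d\nu_n(x)\,d\nu_n(y)-2\int Q\,d\nu_n, \]
which is $-E_{\beta}(\nu_n)$ up to the excluded diagonal. For the upper estimate I truncate, $\log_M=\max(\log,-M)$: since $\log\le\log_M$ and reinstating the diagonal costs only $-M/n$, the right side is $\le \tfrac{\beta}{2}\iint\log_M|x-y|\,d\nu_n\,d\nu_n-2\int Q\,d\nu_n+o(1)$. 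As $\log_M$ and $Q$ are bounded and continuous, passing to a weak* convergent subsequence of the maximizing empirical measures and then letting $M\uparrow\infty$ (monotone convergence) yields $\limsup_n\tfrac1{n^2}\log M_n\le \sup_{\nu}(-E_{\beta}(\nu))=-E_{\beta}(\mu_{K,\beta,Q})$, using $E_{\beta}=\tfrac{\beta}{2}E$ (see (\ref{j5})) and that $\mu_{K,\beta,Q}$ is the minimizer. The matching lower bound $\liminf_n\tfrac1{n^2}\log M_n\ge -E_{\beta}(\mu_{K,\beta,Q})$ comes from evaluating $A_{n,\beta,Q}$ at a suitably separated discretization of $\mu_{K,\beta,Q}$ (weighted Fekete/Leja points, \cite{ST}, Ch.~III), the separation controlling the logarithmic singularity; hence the limit exists and equals $-E_{\beta}(\mu_{K,\beta,Q})$.

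The crux, and the step I expect to be the main obstacle, is the lower bound $\liminf_n\tfrac1{n^2}\log Z_{n,\beta,Q}\ge -E_{\beta}(\mu_{K,\beta,Q})$, because for a general $\tau$ the integral $Z_{n,\beta,Q}$ could be far smaller than $M_n$; this is exactly where the Bernstein--Markov hypothesis is indispensable. I would integrate the variables one at a time, freezing each at its Fekete value. With $z_2,\dots,z_n$ fixed, the $z_1$-dependence of $A_{n,\beta,Q}$ is $\big(e^{-nR(z_1)}|q_1(z_1)|\big)^{\beta}$ with $q_1(z_1)=\prod_{j\ge2}(z_1-z_j)$ monic of degree $n-1\le n$, so the $L^{\beta}$ weighted BM inequality (\ref{bm3}) applies:
\[ \int_K\big(e^{-nR(z_1)}|q_1(z_1)|\big)^{\beta}\,d\tau(z_1)\ \geq\ \frac{\|e^{-nR}q_1\|_K^{\beta}}{\big(C(1+\epsilon)^n\big)^{\beta}}\ \geq\ \frac{|e^{-nR(\xi_1)}q_1(\xi_1)|^{\beta}}{\big(C(1+\epsilon)^n\big)^{\beta}}. \]
The key algebraic identity is that freezing $z_1=\xi_1$ and absorbing $\prod_{j\ge2}|\xi_1-z_j|$ into $|D(z_2,\dots,z_n)|$ reconstitutes $A_{n,\beta,Q}(\xi_1,z_2,\dots,z_n)$, so after integrating $z_1$ the remaining integrand is again of Vandermonde-times-weight form in $(z_2,\dots,z_n)$. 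Iterating over all $n$ variables (the BM constant $C$ being independent of the polynomial, hence of the frozen points) peels off $n$ factors $\big(C(1+\epsilon)^n\big)^{-\beta}$ and lands on $A_{n,\beta,Q}(\xi^{(n)})=M_n$, giving $Z_{n,\beta,Q}\ge C^{-n\beta}(1+\epsilon)^{-n^2\beta}M_n$.

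Taking logarithms, $\tfrac1{n^2}\log Z_{n,\beta,Q}\ge \tfrac1{n^2}\log M_n-\beta\log(1+\epsilon)-o(1)$; letting $n\to\infty$ and then $\epsilon\to0$ yields $\liminf_n\tfrac1{n^2}\log Z_{n,\beta,Q}\ge -E_{\beta}(\mu_{K,\beta,Q})$. Combining the three inequalities, $-E_{\beta}(\mu_{K,\beta,Q})\le \liminf_n\tfrac1{n^2}\log Z_{n,\beta,Q}\le \limsup_n\tfrac1{n^2}\log Z_{n,\beta,Q}\le \lim_n\tfrac1{n^2}\log M_n=-E_{\beta}(\mu_{K,\beta,Q})$, so all quantities coincide, which is precisely the asserted chain of equalities. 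The soft inputs (compactness of $\mathcal{M}(K)$, lower semicontinuity, truncation of $\log$) are routine; the substantive content is the Fekete lower bound for $M_n$ and, above all, the BM-driven telescoping that transfers it to $Z_{n,\beta,Q}$.
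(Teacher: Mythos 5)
Your proof is correct, and it is in substance the very argument the paper outsources: the paper's ``proof'' is only a citation to \cite{BLW} (Propositions 3.2 and 4.15) and \cite{BA} (Theorem 3.9 and Corollary 3.6), and the proofs there follow exactly your scheme --- bound $Z_{n,\beta,Q}\le\tau(K)^nM_n$ with $M_n=\sup_{K^n}A_{n,\beta,Q}$, identify $\lim_n n^{-2}\log M_n=-E_\beta(\mu_{K,\beta,Q})$ via empirical measures with a truncated kernel, and transfer the lower bound to $Z_{n,\beta,Q}$ by iterating the weighted Bernstein--Markov inequality one variable at a time at a maximizing configuration (your degree bookkeeping is also right: each frozen factor $q_k$ is monic of degree $n-1\le n$ against the weight $e^{-nR}$, which is precisely the form covered by (\ref{bm2})--(\ref{bm3})). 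The one step you leave as a sketch, $\liminf_n n^{-2}\log M_n\ge-E_\beta(\mu_{K,\beta,Q})$, does not actually need Fekete/Leja separation estimates: since $\mu=\mu_{K,\beta,Q}$ is a probability measure of finite logarithmic energy, Jensen's inequality gives
\begin{equation*}
M_n\;\ge\;\int_{K^n}A_{n,\beta,Q}\,d\mu^{\otimes n}\;\ge\;\exp\Bigl(\int_{K^n}\log A_{n,\beta,Q}\,d\mu^{\otimes n}\Bigr)=\exp\Bigl(\beta\tbinom{n}{2}\iint\log|x-y|\,d\mu(x)\,d\mu(y)-2n^2\!\int Q\,d\mu\Bigr),
\end{equation*}
and dividing by $n^2$ yields the bound directly, making your write-up fully self-contained.
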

 We note that the two quantities in the  equality on the right  of Proposition 4.1  do not depend on the  measure $\tau.$ 
 
Let $\log\gamma :=-E_{\beta }(\mu_{K,\beta ,Q}).$ Then $\gamma >0$ and let $\eta$ , $0 < \eta <\gamma$ be given.
We define $$B_{\eta, n,\beta }^Q:=\{z\in K^n|A_{n,\beta ,Q}^\frac{1}{n^2}(z)\leq \gamma -\eta \}.$$
Then we have the following result, which we refer to as a Johansson large deviation result:
\begin{proposition}$$Prob_{n,\beta ,Q }(B_{\eta ,n,\beta }^Q)=\frac{1}{Z_{n,\beta ,Q }}\int_{B^Q_{\eta ,n,\beta }}A_{n,\beta ,Q }(z)d\tau (z) \leq(1-\frac{\eta }{2\gamma})^{n^2}\tau (K)^n$$ for all n sufficiently large.\end{proposition}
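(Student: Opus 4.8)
The plan is to bound the integrand on the sublevel set $B^Q_{\eta,n,\beta}$ pointwise and to bound the normalizing constant $Z_{n,\beta,Q}$ from below by means of Proposition 4.1; the factor $2$ appearing in $\eta/(2\gamma)$ is exactly what is needed to absorb the subexponential discrepancy between $Z_{n,\beta,Q}$ and $\gamma^{n^2}$.

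First I would treat the numerator. By the very definition of $B^Q_{\eta,n,\beta}$ we have $A_{n,\beta,Q}(z)\le(\gamma-\eta)^{n^2}$ for every $z\in B^Q_{\eta,n,\beta}$, and $\gamma-\eta>0$ since $0<\eta<\gamma$. As $d\tau$ is the product measure on $K^n$ we have $\tau(K^n)=\tau(K)^n$, so integrating the pointwise bound yields
\begin{equation}
\int_{B^Q_{\eta,n,\beta}}A_{n,\beta,Q}(z)\,d\tau(z)\le(\gamma-\eta)^{n^2}\,\tau(K)^n.
\end{equation}

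Next I would handle the denominator. Since $\log\gamma=-E_{\beta}(\mu_{K,\beta,Q})$, Proposition 4.1 says $\frac{1}{n^2}\log Z_{n,\beta,Q}\to\log\gamma$, equivalently $\bigl(\gamma^{n^2}/Z_{n,\beta,Q}\bigr)^{1/n^2}\to1$. Dividing the previous bound by $Z_{n,\beta,Q}$, it suffices to show
\begin{equation}
\left(1-\frac{\eta}{\gamma}\right)^{n^2}\frac{\gamma^{n^2}}{Z_{n,\beta,Q}}\le\left(1-\frac{\eta}{2\gamma}\right)^{n^2},
\end{equation}
since multiplying through by $\tau(K)^n$ then gives the asserted inequality. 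Taking $n^2$-th roots (all quantities are positive), this reduces to $\bigl(1-\tfrac{\eta}{\gamma}\bigr)\bigl(\gamma^{n^2}/Z_{n,\beta,Q}\bigr)^{1/n^2}\le 1-\tfrac{\eta}{2\gamma}$. The left-hand side converges to $1-\eta/\gamma$, which is strictly less than $1-\eta/(2\gamma)$ because $\eta>0$; hence the inequality holds for all $n$ sufficiently large.

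The argument is essentially bookkeeping once Proposition 4.1 is available, so there is no serious analytic obstacle. The one point demanding care is precisely the matching of constants just described: the ratio $\gamma^{n^2}/Z_{n,\beta,Q}$ is controlled only to subexponential precision, i.e. its $n^2$-th root tends to $1$ but need not equal $1$, so one cannot afford the exponent $1-\eta/\gamma$ itself. Replacing it by the slightly larger $1-\eta/(2\gamma)$ opens a strictly positive gap $\eta/(2\gamma)$ into which this fluctuation fits for large $n$, which is the sole reason the factor $2$ is present.
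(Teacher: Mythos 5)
Your proof is correct: bounding the integrand by $(\gamma-\eta)^{n^2}$ on the sublevel set, bounding the denominator via $Z_{n,\beta,Q}^{1/n^2}\to\gamma$ from Proposition 4.1, and letting the factor $2$ in $\eta/(2\gamma)$ absorb the merely subexponential control of the ratio $\gamma^{n^2}/Z_{n,\beta,Q}$ is exactly the standard Johansson-type argument. The paper gives no proof of its own here, simply citing \cite{BLW} (Proposition 4.15) and \cite{BA} (Theorem 4.1), whose proofs proceed along the same lines, so your proposal matches the intended argument while having the merit of being self-contained.
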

\begin{proof}
See \cite{BLW}, Proposition 4.15 or \cite{BA}, Theorem 4.1.
\end{proof}
\begin{proposition}Let $G$ be a neighbourhood of $\mu_{K,\beta ,Q}$ in $\mathcal{M}(K).$Then 
$$\frac{1}{Z_{n,\beta ,Q}}\int_{K^n\setminus\tilde{G}_n}A_{n,\beta ,Q}(z)d\tau\leq O(e^{-cn^2})$$
for some $c>0.$
\begin{proof}It follows (see \cite{BLW}, claim (6.16)  or Proposition 7.3 of \cite{BA}) that for some $\eta >0$,$$B^Q_{\eta ,n,\beta }\supset K^n\setminus\tilde{G}_n$$
for all $n$ sufficiently large, and so the result then follows from Proposition 4.2.\end{proof}
\end{proposition}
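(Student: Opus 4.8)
The plan is to reduce the statement to Proposition 4.2 by showing that the ``bad'' event $K^n\setminus\tilde{G}_n$ is, for all large $n$, contained in the sublevel set $B^Q_{\eta,n,\beta}$ for a suitable $\eta>0$; this is exactly the inclusion that \cite{BLW} and \cite{BA} supply, and I would establish it directly from the variational characterization of $\mu_{K,\beta,Q}$. Writing $\nu_n=\frac1n\sum_{j=1}^n\delta(z_j)$ for the empirical measure of a configuration $z\in K^n$, a direct computation from (\ref{j1}) gives
$$\frac{1}{n^2}\log A_{n,\beta,Q}(z)=\frac{\beta}{2n^2}\sum_{i\neq j}\log|z_i-z_j|-2\int_K Q\,d\nu_n,$$
so that heuristically $\frac{1}{n^2}\log A_{n,\beta,Q}(z)\approx -E_\beta(\nu_n)$. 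Since $\log\gamma=-E_\beta(\mu_{K,\beta,Q})$ and $\mu_{K,\beta,Q}$ is the \emph{unique} minimizer of $E_\beta$, one expects $A_{n,\beta,Q}(z)^{1/n^2}$ to be bounded away from $\gamma$ as soon as $\nu_n$ stays away from $\mu_{K,\beta,Q}$.

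The difficulty is that this identity is only formal: the empirical measure has infinite logarithmic self-energy, so $E_\beta(\nu_n)=-\infty$ and the double integral cannot be used as written. I would circumvent this by truncation. For $M>0$ set $g_M(x,y)=\max(\log|x-y|,-M)$, a bounded continuous kernel with $g_M\geq\log|x-y|$, and define
$$E_\beta^M(\nu)=-\frac{\beta}{2}\iint g_M\,d\nu\,d\nu+2\int_K Q\,d\nu.$$
Because $g_M(z_i,z_i)=-M$, separating the diagonal gives
$$\frac{1}{n^2}\sum_{i\neq j}\log|z_i-z_j|\leq\frac{1}{n^2}\sum_{i\neq j}g_M(z_i,z_j)=\iint g_M\,d\nu_n\,d\nu_n+\frac{M}{n},$$
and therefore
$$\frac{1}{n^2}\log A_{n,\beta,Q}(z)\leq -E_\beta^M(\nu_n)+\frac{\beta M}{2n}.$$
The gain is that $E_\beta^M$ is weak* continuous on the compact space $\mathcal{M}(K)$ (the kernel $g_M$ and the function $Q$ both being bounded and continuous), while $E_\beta^M\uparrow E_\beta$ as $M\to\infty$ by monotone convergence.

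Now let $G$ be the given neighbourhood of $\mu:=\mu_{K,\beta,Q}$. Its complement $\mathcal{M}(K)\setminus G$ is weak* closed, hence compact. Since $E_\beta$ is lower semicontinuous with $\mu$ as its unique minimizer, it attains on $\mathcal{M}(K)\setminus G$ an infimum strictly larger than $E_\beta(\mu)$, say equal to $E_\beta(\mu)+2\kappa$ with $\kappa>0$. A Dini-type argument — the $E_\beta^M$ are continuous, increasing in $M$, and converge pointwise to the lower semicontinuous $E_\beta$ on the compact set $\mathcal{M}(K)\setminus G$ — shows that $\inf_{\mathcal{M}(K)\setminus G}E_\beta^M\uparrow\inf_{\mathcal{M}(K)\setminus G}E_\beta$. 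I would fix $M$ so large that $\inf_{\mathcal{M}(K)\setminus G}E_\beta^M\geq E_\beta(\mu)+\kappa$. Then for every $z\in K^n\setminus\tilde{G}_n$, i.e. $\nu_n\notin G$,
$$\frac{1}{n^2}\log A_{n,\beta,Q}(z)\leq -\big(E_\beta(\mu)+\kappa\big)+\frac{\beta M}{2n}\leq\log\gamma-\frac{\kappa}{2}$$
once $n$ is large enough that $\beta M/(2n)\leq\kappa/2$. Exponentiating gives $A_{n,\beta,Q}(z)^{1/n^2}\leq\gamma e^{-\kappa/2}$, so with $\eta:=\gamma(1-e^{-\kappa/2})>0$ we obtain $K^n\setminus\tilde{G}_n\subset B^Q_{\eta,n,\beta}$ for all large $n$. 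Proposition 4.2 then bounds the probability by $(1-\eta/(2\gamma))^{n^2}\tau(K)^n$, and since $\tau(K)^n$ grows at most like $e^{Cn}$ it is absorbed by the $n^2$-speed decay, yielding $O(e^{-cn^2})$.

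The main obstacle is precisely the passage from the truncated functional back to $E_\beta$: one must choose the truncation level $M$ \emph{uniformly} over the whole complement $\mathcal{M}(K)\setminus G$ before letting $n\to\infty$, and this is exactly what the compactness of $\mathcal{M}(K)$, the lower semicontinuity of $E_\beta$, and the uniqueness of its minimizer make possible. Everything else — the diagonal correction $\beta M/(2n)$ and the subexponential factor $\tau(K)^n$ — is harmless bookkeeping.
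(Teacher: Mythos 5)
Your proof is correct, and it follows the same two-step skeleton as the paper --- establish the inclusion $K^n\setminus\tilde{G}_n\subset B^Q_{\eta,n,\beta}$ for all large $n$, then invoke Proposition 4.2 --- but where the paper simply cites that inclusion (claim (6.16) of \cite{BLW} or Proposition 7.3 of \cite{BA}), you prove it from scratch, which is a genuinely different (and self-contained) route. Your truncation device is the standard mechanism behind the cited results: replacing $\log|x-y|$ by $g_M=\max(\log|x-y|,-M)$ produces a weak* continuous functional $E_\beta^M$ that dominates the exponent $\frac{1}{n^2}\log A_{n,\beta,Q}$ up to the diagonal correction $\beta M/(2n)$, and your bookkeeping here is right ($g_M(z_i,z_i)=-M$, so the off-diagonal sum exceeds the full double integral by exactly $M/n$). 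The compactness of $\mathcal{M}(K)$, the lower semicontinuity of $E_\beta$ (which indeed follows from $E_\beta=\sup_M E_\beta^M$ with each $E_\beta^M$ continuous), and the uniqueness of the minimizer $\mu_{K,\beta,Q}$ give a uniform energy gap $2\kappa>0$ on the closed set $\mathcal{M}(K)\setminus G$; your Dini-type argument transferring the gap to $E_\beta^M$ for a single fixed $M$ is valid (take minimizers $x_M$ of $E_\beta^{M}$, extract a convergent subsequence, and use $E_\beta^{M_0}(x_{M_k})\leq\inf E_\beta^{M_k}$ for $M_k\geq M_0$ together with continuity of $E_\beta^{M_0}$), and it is exactly the point where the truncation level must be chosen uniformly before letting $n\to\infty$, as you say. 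The endgame also checks: $\eta=\gamma(1-e^{-\kappa/2})$ satisfies $0<\eta<\gamma$ as required in the definition of $B^Q_{\eta,n,\beta}$, and $\tau(K)^n=e^{O(n)}$ is absorbed by the speed-$n^2$ decay from Proposition 4.2. What your route buys is independence from \cite{BLW} and \cite{BA}, at the cost of a page of standard potential-theoretic compactness arguments; the only external inputs you need are the finiteness of $E_\beta(\mu_{K,\beta,Q})$ and the uniqueness of the minimizer, both supplied by (\cite{ST}, Chapter I, Theorem 1.3).
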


\section{the normalizing constants}
Proposition 4.1 gives an asymptotic result for the normalizing constants $ Z_{n,\beta ,Q }$. Note that if we have a sequence of continuous functions $\{Q_n\}$ converging uniformly to $Q$ on $K$, then $\frac{1}{n^2}\log Z_{n,\beta ,Q }$ and
$\frac{1}{n^2}\log Z_{n,\beta ,Q_n}$ have the same limit, and in particular,  this is true for $\frac{1}{n^2}\log Z_{n,\beta ,\frac{nQ}{n-1}}.$
We will,  however,  need a sharper result, namely:
\begin{theorem}Let $K$ be a regular compact subset of $\CC$, $Q$ a real-valued continuous function on $K$ and $\tau$ a   measure on $K$ satisfying the  weighted BM inequality for $e^{-R}$. Then$$\lim_{n\rightarrow\infty}\Big(\frac{Z_{n,\beta ,Q}}{Z_{n-1,\beta ,\frac{nQ}{n-1}}}\Big)^{\frac{1}{n}}=e^{-\rho\beta}.$$
\end{theorem}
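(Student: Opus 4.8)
The plan is to integrate out the first variable and recognise the ratio as an expectation under the $(n-1)$-point ensemble. Factoring $A_{n,\beta,Q}$ into the pairs involving $z_1$ times the rest, and using the identity $2(n-1)\cdot\frac{nQ}{n-1}=2nQ$ (which is precisely why the potential $\frac{nQ}{n-1}$ is chosen), one checks directly that
$$Z_{n,\beta,Q}=\int_{K^{n-1}}I(z_2,\dots,z_n)\,A_{n-1,\beta,\frac{nQ}{n-1}}(z_2,\dots,z_n)\,d\tau,$$
where $I(z_2,\dots,z_n)=\int_K\big|e^{-nR(z_1)}p(z_1)\big|^{\beta}\,d\tau(z_1)$ and $p(t)=\prod_{j=2}^n(t-z_j)$ is monic of degree $n-1$. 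Dividing by $Z_{n-1,\beta,\frac{nQ}{n-1}}$ exhibits the ratio as the expectation $\mathbb{E}_{n-1,\beta,\frac{nQ}{n-1}}[I]$, so it suffices to show $\frac1n\log\mathbb{E}[I]\to-\rho\beta$.

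For the lower bound I would derive a configuration-\emph{independent} estimate on $I$. Since $p$ is monic of degree $n-1$, writing $e^{-nR}p=e^{-R}\cdot e^{-(n-1)R}p$ and using that $e^{-R}$ is bounded below by a positive constant on the compact set $S_R$ lets me invoke (\ref{p52}) at level $n-1$ to get $\|e^{-nR}p\|_{S_R}\ge c_0 e^{-n\rho}$ with $c_0>0$ independent of the $z_j$. Combining this with (\ref{p51}) and the $L^{\beta}$ Bernstein--Markov inequality (\ref{bm3}) --- essentially Lemma 3.3 adapted to degree $n-1$ --- yields $I\ge c(1+\epsilon)^{-n\beta}e^{-n\rho\beta}$ for every configuration, whence $\liminf_n\frac1n\log\mathbb{E}[I]\ge-\rho\beta$ after letting $\epsilon\to0$.

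For the upper bound I would split the expectation according to whether the empirical measure $\frac{1}{n-1}\sum_{j=2}^n\delta_{z_j}$ lies in a fixed neighbourhood $G$ of $\mu_{K,R}$. On $\tilde G_{n-1}$, pulling out one factor $e^{-R}$ (bounded above on $S_R$) and applying Lemma \ref{lemma1} at level $n-1$ gives $\|e^{-nR}p\|_{S_R}\le Ce^{-(n-1)(\rho-\epsilon)}$, hence $I\le Ce^{-(n-1)(\rho-\epsilon)\beta}$ there. On the complement, a crude bound $I\le C^n$ (from $\|p\|_K\le C^n$ and boundedness of $R$ on $K$) multiplies the Johansson estimate (Proposition 4.3), $\mathrm{Prob}\big((\tilde G_{n-1})^c\big)=O(e^{-cn^2})$, which is negligible on the $\tfrac1n\log$ scale. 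Adding the two pieces gives $\limsup_n\frac1n\log\mathbb{E}[I]\le-(\rho-\epsilon)\beta$, and $\epsilon\to0$ finishes.

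The main obstacle, I expect, is the bookkeeping of two mismatches. First is the discrepancy between the weight exponent $n$ and the polynomial degree $n-1$: the sup-norm bounds (\ref{p52}) and Lemma \ref{lemma1} are calibrated for degree equal to the weight index, so I must absorb the surplus factor $e^{-R}$, which is harmless because it is bounded away from $0$ and $\infty$ on $S_R$ and contributes only a constant under the $n$-th root. Second, Proposition 4.3 must be applied to the ensemble with the $n$-dependent potential $Q_n=\frac{nQ}{n-1}$; since $Q_n\to Q$ uniformly and the minimiser $\mu_{K,\beta,Q_n}\to\mu_{K,\beta,Q}$ weak$^*$, for large $n$ the concentration neighbourhood of $\mu_{K,\beta,Q_n}$ sits inside the fixed $G$ chosen for Lemma \ref{lemma1}, so the speed-$n^2$ estimate indeed dominates the $e^{Cn}$ growth of $I$ exactly as required.
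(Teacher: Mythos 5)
Your proposal is correct and takes essentially the same route as the paper: the same factorization of $A_{n,\beta,Q}$ over the $z_1$ variable (your expectation $\mathbb{E}_{n-1,\beta,\frac{nQ}{n-1}}[I]$ is just a repackaging of the paper's split $Z_{n,\beta,Q}=I_1+I_2$), the same lower bound via the weighted Bernstein--Markov inequality combined with (\ref{p51}) and (\ref{p52}) (i.e.\ Lemma 3.3), and the same upper bound pairing Lemma \ref{lemma1} on $\tilde G_{n-1}$ with the Johansson estimate of Proposition 4.3 on the complement. Your two bookkeeping concerns — the degree-$(n-1)$ polynomial against the weight $e^{-nR}$, and applying Proposition 4.3 to the $n$-dependent potential $\frac{nQ}{n-1}$ — are resolved in the paper by silently absorbing $O(C^n)$ factors (in particular, comparing the integrand with $A_{n-1,\beta,Q}$ at a cost of $O(C^n)$, which the $e^{-cn^2}$ speed dominates), so your extra care merely makes explicit what the paper leaves implicit.
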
  
\begin{proof}
We will first prove that \begin{equation}\label{e1}\liminf_{n\rightarrow\infty}\Big(\frac{Z_{n,\beta ,Q}}{Z_{n-1,\beta ,\frac{nQ}{n-1}}}\Big)^{\frac{1}{n}}\geq e^{-\rho\beta}.\end{equation}
Now $ Z_{n,\beta ,Q}=\int_{K^n}A_{n,\beta ,Q }(z)d\tau (z).$ 
We regard $A_{n,\beta ,Q}(z)$ as a function of $z_1$ and we  apply Lemma 3.3
to the integral in the $z_1$ variable to obtain
\begin{equation}\label{e3}Z_{n,\beta ,Q}\geq C(1+\epsilon )^{-n\beta }e^{-n\rho\beta}\int_{K^n}|D(z_2,\ldots ,z_n)|^{\beta }e^{-2n[Q(z_2)+\ldots+Q(z_n)]}d\tau (z_2)\ldots d\tau (z_n)\end{equation}$$=C(1+\epsilon )^{-n\beta}e^{-n\rho\beta }Z_{n-1,\beta ,\frac{nQ}{n-1}}.$$
Since $\epsilon >0 $ is arbitrary, the estimate on the lower limit follows.

To complete the proof we must show
  \begin{equation}\label{e4}\limsup_{n\rightarrow\infty}\Big(\frac{Z_{n,\beta ,Q}}{Z_{n-1,\beta ,\frac{nQ}{n-1}}}\Big)^{\frac{1}{n}}\leq e^{-\rho\beta}.\end{equation}
Let  $\tilde{G}_{n-1}$ be a subset of $K^{n-1}$ determined as follows: Given $\epsilon > 0$ choose a neighbourhood $G$ of $\mu_{K,\beta ,Q}$  so that Lemma 2.1  holds. Recall that
$$\tilde{G}_{n-1}=\{(z_2,\ldots ,z_n)\in K^{n-1}|\frac{1}{n-1}\sum_{j=2}^n\delta (z_j) \in G\}.$$
 We  write the integral for $Z_{n,\beta ,Q}$ as a sum of two integrals $$Z_{n,\beta ,Q}=I_1+I_2$$where
$$I_1=\int_K\int_{K^{n-1}\setminus \tilde{G}_{n-1}}A_{n,\beta ,Q}(z)d\tau (z)$$ and
$$I_2=\int_K\int_{\tilde{G}_{n-1}}A_{n,\beta ,Q}(z)d\tau (z).$$
Now,\begin{equation}\label{e6}I_1=\int_K\prod_{j=2}^n|z_1-z_j|^{\beta }e^{-2nQ(z_1)}d\tau (z_1)\times \end{equation}
$$\int_{K^{n-1}\setminus\tilde{G}_{n-1}}|D(z_2,\ldots,z_n)|^{\beta }e^{-2n[Q(z_2)+\ldots+Q(z_n)]}d\tau (z_2)\ldots d\tau (z_n).$$
Since $K$ is compact, the first factor is $O( C^n)$ for some  $C>0.$  The integrand in the second factor differs from $A_{n-1,\beta , Q}(z_2,\ldots,z_n)$ by  a factor of $O(C^n)$ for some $C>0$ so the second factor is $O(e^{-cn^2})Z_{n-1,\beta ,Q}$ by Proposition 4.3.
 Since 
$$Z_{n-1,\beta , Q}=O(C^n)Z_{n-1,\beta ,\frac{nQ}{n-1}},$$we may conclude that $$I_1=O(e^{-cn^2})Z_{n-1,\beta,\frac{n Q}{n-1}}.$$
Also \begin{equation}\label{e5}I_2=\int_K\prod_{j=2}^n|z_1-z_j|^{\beta }e^{-2nQ(z_1)}d\tau (z_1)\times\end{equation}
$$\int_{\tilde{G}_{n-1}}|D(z_2,\ldots,z_n)|^{\beta }e^{-2n[Q(z_2)+\ldots+Q(z_n)]}d\tau (z_2)\ldots d\tau (z_n).$$
We will need a more precise estimate on the first factor than the one used in (\ref{e6}). Since we are integrating over $\tilde{G}_{n-1}$ we may use Lemma 2.1 on the first factor to see that it is
 $\leq C e^{-n\beta(\rho-\epsilon )}.$  The second factor is $\leq Z_{n-1,\beta ,\frac{nQ}{n-1}}$
since $\tilde{G}_{n-1}$ is a subset of $K^{n-1}.$
Hence, given any $\epsilon > 0$,  we have for some $c,C>0$ $$I_1+I_2\leq Ce^{-n\beta (\rho -\epsilon)}Z_{n-1,\beta ,\frac{nQ}{n-1}}+O(e^{-cn^2})Z_{n-1,\beta ,\frac{nQ}{n-1}}$$and
(\ref{e4}) follows.
\end{proof}

\section{large deviation}
Given a separable, complete metric space $X,$  a sequence of probability measures $\{\sigma_n\}$ on $X$ is said to satisfy a
large deviation principle  with speed $n$ and rate function $\mathbb{J}(x)$ if $\mathbb{J}:X\rightarrow [0,\infty ]$ is lower-semicontinuous, $\{x\in X|\mathbb{J}(x)\leq l\}$ is compact for $l\geq 0$ and\\
(i) For all closed sets $F\subset X$ we have
$$\limsup_n\frac{1}{n}\log \sigma_n(F)\leq-\inf_{x\in F}\mathbb{J}(x).$$
(ii) For all open sets $G\subset X$ we have
$$\liminf_n\frac{1}{n}\log \sigma_n(G)\geq-\inf_{x\in G}\mathbb{J}(x).$$
If $X$ is compact by (\cite{AD}, Theorem 4.1.11) to establish the l.d.p. it suffices to show that, for all $x\in X$
\begin{equation}\label{lr}-\mathbb{J}(x)=\lim_{\epsilon\rightarrow 0}\lim_n\frac{1}{n}\log \sigma_n(B(x,\epsilon ))\end{equation}
where $B(x,\epsilon)$ is the ball centre $x$, radius $\epsilon.$
If $X$ is non-compact, to establish the l.d.p. there is an additional condition required, termed \textit{exponential tightness,} namely:
For all $r>0$ there is a compact set  $X_r\subset X$ with $\sigma_n(X_r)\leq -r.$

 Let $K\subset\CC$ be a regular compact set and $Q$  a real - valued continuous function on $K$. Let $\tau$ be a  measure on $K$ which satisfies the weighted BM inequality for $ e^{-R}$
   (recall that $R=2Q/\beta$). Given the probability measures $ Prob_{n,\beta ,Q}$ forming  a $\beta$ ensemble on $K$, we consider the countable family of probability measures $\{\psi_n\},n=1,2,...$ on $K$ given as follows:
For $W$ an open subset of $K$

\begin{equation}\label{l0}\psi_n(W)=Prob_{n,\beta ,Q}\{z_1\in W\}=\frac{1}{Z_{n,\beta ,Q }}\int_W\int_{K^{n-1}}A_{n,\beta ,Q }(z)d\tau (z).\end{equation}

To prove the l.d.p., we will assume that $\tau$ satisfies Hypothesis 3.9. We will also use an additional assumption.
\begin{hypothesis}
\begin{equation}S_R=S_R^*.\end{equation}
\end{hypothesis}
Now, to prove the l.d.p. we will use equation (\ref{lr}) and so we will have to estimate
 $$\inf_{\{W|z_1\in W\}}\lim_n\frac{1}{n}\log\psi_n(W).$$ We will be able to do so for $z_1\in K\setminus S_R^*$ and $z_1\in S_R$. Thus under  Hypothesis 6.1  the l.d.p. will be complete.

For $\beta$ ensembles on $\RR$ ,  $\tau $ the Lebesgue measure on $\RR$ and $Q$ real analytic, then 
results of \cite{KM}  show that Hypothesis 6.1  holds for all but at most  countably many values of $\beta.$

   A class of examples where Hypothesis 6.1 holds are provided by the computations in 
(\cite{ST}, Chapter  IV, section 6). We suppose $R(z)$ is invariant under rotations of the plane, that  $R(t)$ is differentiable on $ (0,\infty)$  and satisfies $tR'(t)$ and $R(t)$ are increasing on $ (0,\infty)$. Then $S_R$ is a disc of radius $T_0$  given by the  solution of $T_0R'(T_0)=1.$ If we take $K$  a disc of radius $>T_0$,  then $V_{K,R}(z)=\log|z|+R(T_0)-\log T_0$ while $R(z)>V_{K,R}(z)$ for $|z|>T_0$
 so that $S_R^*=S_R$. $R(z)=|z|^2$ provides a specific example.

  It is simple to construct examples where Hypothesis 6.1 does not hold. For example, let $K$ be a large disc with $S_R^*\subset \text{interior}(K).$ Replacing $R$ by $R'=V_{K,R}$ then $V_{K,R'}=V_{K,R}$  so $ S_{R'}=S_R$  but $S_{R'}^*=K.$

\begin{theorem}
Let $K$ be a regular, compact subset of $\CC ,$  $Q$ a real-valued continuous function on $K$ and $\tau$ a  measure with supp($\tau)= K$ and  satisfying Hypotheses 3.9 and 6.1. Let $Prob_{n,\beta ,Q}$ be a $\beta$ ensemble on $K.$ Then the sequence of measures $\psi_n$, given by $\psi_n(W)=Prob_{n,\beta ,Q}\{z_1\in W\}$  for $W$ an open subset of $K$  satisfies a l.d.p. with speed $n$ and rate function
\begin{equation}\label{l0}\mathbb{J}_{K,\beta ,Q}(z_1)=\beta (R(z_1)-V_{K,R}(z_1))=2Q(z_1)-\beta V_{K,R}(z_1).\end{equation}  \end{theorem}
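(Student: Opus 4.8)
The plan is to verify the local criterion (\ref{lr}): since $K$ is compact, by (\cite{AD}, Theorem 4.1.11) it is enough to prove that for every $z_1\in K$
$$-\mathbb{J}_{K,\beta,Q}(z_1)=\lim_{\epsilon\to 0}\lim_n\tfrac1n\log\psi_n(B(z_1,\epsilon)).$$
Before this I would record that $\mathbb{J}_{K,\beta,Q}=\beta(R-V_{K,R})$ is continuous on $K$ (because $Q$ is continuous and $V_{K,R}$ is continuous by regularity of $K$), is nonnegative since $V_{K,R}\le R$ on $K$, and by (\ref{p5}) together with Hypothesis 6.1 vanishes exactly on $S_R=S_R^*$ and is strictly positive on $K\setminus S_R^*$. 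Hence $\mathbb{J}$ is a bona fide rate function with compact sublevel sets, and the regimes $z_1\in S_R$ and $z_1\in K\setminus S_R^*$ cover $K$. To exploit (\ref{lr}) I isolate the first coordinate: from $D(z_1,\ldots,z_n)=\prod_{j=2}^n(z_1-z_j)\,D(z_2,\ldots,z_n)$ one gets
$$\psi_n(W)=\frac{1}{Z_{n,\beta,Q}}\int_W e^{-2nQ(w)}F_n(w)\,d\tau(w),$$
with
$$F_n(w)=\int_{K^{n-1}}\Big|\prod_{j=2}^n(w-z_j)\Big|^{\beta}|D(z_2,\ldots,z_n)|^{\beta}e^{-2n[Q(z_2)+\cdots+Q(z_n)]}\,d\tau,$$
and the inner integral without the $|w-z_j|$ factors is exactly $Z_{n-1,\beta,\frac{nQ}{n-1}}$. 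Everything then hinges on the exponential size of $F_n(w)/Z_{n-1,\beta,\frac{nQ}{n-1}}$, which by (\ref{p3}) should be $\exp(\beta(n-1)(V_{K,R}(w)-\rho))$; since Theorem 5.1 gives $\tfrac1n\log\big(Z_{n-1,\beta,\frac{nQ}{n-1}}/Z_{n,\beta,Q}\big)\to\beta\rho$, the $\rho$'s cancel and the integrand of $\psi_n(B(z_1,\epsilon))$ is, to exponential order, $e^{-n\mathbb{J}_{K,\beta,Q}(w)}$.

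For the upper bound, valid for every $z_1$, I split $F_n(w)$ over $\tilde G_{n-1}$ and its complement. On $\tilde G_{n-1}$ Corollary 2.2 gives, uniformly in $w\in K$, the bound $\tfrac1{n-1}\sum_{j=2}^n\log|w-z_j|\le V_{K,R}(w)-\rho+\epsilon$, so $\big|\prod_{j=2}^n(w-z_j)\big|^{\beta}\le e^{\beta(n-1)(V_{K,R}(w)-\rho+\epsilon)}$; on $K^{n-1}\setminus\tilde G_{n-1}$ the crude bound $\prod|w-z_j|^{\beta}\le C^n$ together with Proposition 4.3 makes the contribution $O(e^{-cn^2})Z_{n-1,\beta,\frac{nQ}{n-1}}$, hence negligible. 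This yields $F_n(w)\le C e^{\beta(n-1)(V_{K,R}(w)-\rho+\epsilon)}Z_{n-1,\beta,\frac{nQ}{n-1}}$; inserting it, applying Theorem 5.1, and using $\int_{B(z_1,\epsilon)}e^{-n\mathbb{J}(w)}d\tau\le e^{-n\inf_{B}\mathbb{J}}\tau(K)$ gives, after $\epsilon\to0$ and continuity of $\mathbb{J}$, the estimate $\lim_{\epsilon\to0}\limsup_n\tfrac1n\log\psi_n(B(z_1,\epsilon))\le-\mathbb{J}(z_1)$.

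The lower bound is the heart of the matter, and the obstacle is the logarithmic singularity of $\log|w-z_j|$: there is no configuration-uniform lower bound on $\tfrac1{n-1}\sum\log|w-z_j|$ once a particle approaches $w$. For $z_1\in S_R=\mathrm{supp}\,\mu_{K,R}$ I sidestep the pointwise analysis altogether. By permutation symmetry of $A_{n,\beta,Q}$ one has $\psi_n(W)=\mathbb{E}_{Prob_{n,\beta,Q}}[\hat\mu_n(W)]$ with $\hat\mu_n=\tfrac1n\sum_j\delta_{z_j}$, and Proposition 4.3 forces $\hat\mu_n\to\mu_{K,R}$ in probability, whence $\mathbb{E}[\hat\mu_n]\to\mu_{K,R}$ weakly. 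Since $\mu_{K,R}(B(z_1,\epsilon))>0$ for $z_1\in S_R$, the portmanteau theorem gives $\liminf_n\psi_n(B(z_1,\epsilon))>0$, so $\tfrac1n\log\psi_n\to 0=-\mathbb{J}(z_1)$.

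For $z_1\in K\setminus S_R^*$ I return to $F_n(w)$ but now $B(z_1,\epsilon)$ can be chosen at distance $\ge\epsilon_0>0$ from $S_R$. I lower-bound $F_n(w)$ by restricting the $(z_2,\ldots,z_n)$-integral to those configurations in $\tilde G_{n-1}$ in which no particle leaves the $\epsilon_0/2$-neighbourhood of $S_R$; on this event every $|w-z_j|\ge\epsilon_0/2$, the map $t\mapsto\log|w-t|$ is continuous where the particles live, and weak closeness of $\hat\mu_{n-1}$ to $\mu_{K,R}$ (uniformly in $w\in B(z_1,\epsilon)$, as these $w$ avoid $S_R$) yields $\tfrac1{n-1}\sum\log|w-z_j|\ge V_{K,R}(w)-\rho-\epsilon$. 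This event has probability tending to $1$: its empirical-near-$\mu$ part by Proposition 4.3, and its no-distant-particle part by a union bound applied to the upper bound already proved for the $(n-1)$-particle ensemble. Discarding the complement, invoking Theorem 5.1, and using $\int_{B(z_1,\epsilon)}e^{-n\mathbb{J}(w)}d\tau\ge e^{-n\sup_{B}\mathbb{J}}\tau(B(z_1,\epsilon))$ with $\tau(B(z_1,\epsilon))>0$ (as $\mathrm{supp}\,\tau=K$) gives $\liminf_n\tfrac1n\log\psi_n(B(z_1,\epsilon))\ge-\sup_B\mathbb{J}-\beta\epsilon$, and $\epsilon\to0$ matches the upper bound. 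The main technical difficulty is exactly this singularity control in the lower bound; the attendant bookkeeping with the perturbed weight $\tfrac{nQ}{n-1}$ and the transfer of the concentration and upper-bound estimates to the $(n-1)$-particle ensemble is the principal chore.
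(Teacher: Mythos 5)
Your overall architecture coincides with the paper's: the local criterion (\ref{lr}), the continuity and nonnegativity of $\mathbb{J}_{K,\beta,Q}$ via (\ref{p3}) and regularity of $K$, the split of the first-coordinate integral over $\tilde G_{n-1}$ and its complement with Corollary 2.2 and Proposition 4.3, and Theorem 5.1 to cancel the Robin constants. Your treatment of $z_1\in S_R$ is a harmless variant: the paper uses symmetry in the form $n\,\psi_n(W)\ge 1-Prob_{n,\beta,Q}\{\text{all coordinates in } K\setminus W\}$ and then Propositions 4.1 and 4.2 applied to $K\setminus W$, while you use $\psi_n(W)=\mathbb{E}[\hat\mu_n(W)]$ and portmanteau; both rest on the same $n^2$-speed concentration and both are correct.

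The genuine gap is in your lower bound for $z_1\in K\setminus S_R^*$. There you keep the ensemble on all of $K$ and control the event that some particle leaves a neighbourhood $N$ of $S_R$ by ``a union bound applied to the upper bound already proved for the $(n-1)$-particle ensemble.'' But the relevant $(n-1)$-particle measure, with density proportional to $|D(z_2,\ldots,z_n)|^{\beta}e^{-2n[Q(z_2)+\cdots+Q(z_n)]}$, is the ensemble $Prob_{n-1,\beta,\frac{nQ}{n-1}}$, whose potential varies with $n$, whereas your upper bound was proved for the fixed potential $Q$. The transfer cannot be done by comparing densities: the ratio of the unnormalized densities of $Prob_{n-1,\beta,\frac{nQ}{n-1}}$ and $Prob_{n-1,\beta,Q}$ is $e^{-2\sum_{j\ge 2}Q(z_j)}=e^{\Theta(n)}$, exactly the order of the speed-$n$ estimate you want to import, so any such comparison destroys it. (This is why such comparisons in the paper occur only off events of probability $O(e^{-cn^2})$, where an $e^{O(n)}$ factor is harmless.) To repair your route you would have to rerun the entire upper-bound machinery uniformly for the triangular array $Q_n=\frac{nQ}{n-1}$, checking $\|V_{K,R_n}-V_{K,R}\|_K=O(1/n)$, $\rho_n\to\rho$, $\mu_{K,R_n}\to\mu_{K,R}$ weak$^*$, and uniform versions of Theorem 5.1 and Proposition 4.3; this is true but substantially more than the ``bookkeeping'' you defer, and it makes the one-point upper bound an input to the lower bound, which the paper never needs.

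The paper's mechanism avoids all of this, and it is precisely where Hypothesis 3.9 earns its keep: since $A_{n,\beta,Q}$ is in each variable of the form $|e^{-nR}p|^{\beta}$ with $p\in\mathcal{P}_n$, iterating Lemma 3.11 gives $Z_{n,\beta,Q}(K)\le (1+O(e^{-cn}))Z_{n,\beta,Q}(N)$ and likewise for $Z_{n-1,\beta,\frac{nQ}{n-1}}$, for $N$ a compact neighbourhood of $S_R^*=S_R$ disjoint from $\overline W$. One may therefore replace the whole ensemble by the ensemble on $N$, whose weighted equilibrium measure is still $\mu_{K,R}$ (as $S_R\subset N$), so Proposition 4.3 applies on $N$; since $\log|w-t|$ is continuous for $w\in W$, $t\in N$, your weak$^*$ two-sided estimate then goes through verbatim, as in (\ref{l23})--(\ref{l25}). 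In other words, the no-escape fact you want is a direct $O(e^{-cn})$ consequence of the Bernstein--Markov inequality, with no union bound and no perturbed large deviation estimate; substituting this localization for your paragraph closes the gap, and the rest of your proof stands.
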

\begin{proof}
Using (\ref{p3}) we have $$\mathbb{J}_{K,\beta ,Q }(z_1)=2Q(z_1)-\beta (\int_K\log|z_1-t|d\mu_{K,\beta,Q}(t)+\rho).$$ Note that $\mathbb{J}_{K,\beta ,Q}(z_1)=0$  for $z_1\in S_R.$\\
Also since $V_{K,R}$ is continuous, the function
\begin{equation}\label{lc}z_1\rightarrow\int_K\log|z_1-t|d\mu_{K,\beta ,Q}(t)\end{equation} is continuous  and so is the rate function.

We will use  equation (\ref{lr}). Let $W$ be an open subset of $K$. We will estimate the probability that a point $w\in W.$\\
 By definition, 
\begin{equation}\label{l1} Prob_{n,\beta ,Q }\{w\in W\}=\frac{1}{Z_{n,\beta ,Q }}\int_W\int_{K^{n-1}}A_{n,\beta ,Q }(w,z_2,...,z_n)d\tau .\end{equation}
We will separately estimate $\limsup_n$ and  $\liminf_n$ of $$1/n\log(Prob_{n,\beta ,Q}\{w\in W\})$$ 
We begin with the $\limsup.$
We write the above integral as a sum of two integrals (where $G$ is an open neighbourhood of $\mu_{K,\beta ,Q}\subset \mathcal{M}(K)$  which is to be specified).$$\frac{1}{Z_{n,\beta ,Q }}\int_W\int_{K^{n-1}}A_{n,\beta ,Q }(w,z_2,...z_n)d\tau=H_1+H_2.$$
$$H_1=\frac{1}{Z_{n,\beta ,Q}}\int_W\int_{K^{n-1}\setminus\tilde{G}_{n-1}}A_{n,\beta ,Q}(w,z_2,...z_n)d\tau$$and
$$H_2=\frac{1}{Z_{n,\beta ,Q}}\int_W\int_{\tilde{G}_{n-1}}A_{n,\beta ,Q}(w,z_2,...z_n)d\tau.$$
  Similar to the estimate for $I_1,$ (see (\ref{e6}))  we have  $H_1=O(e^{-cn^2}).$\\
To estimate $H_2$  we now proceed as in \cite{AGZ}.
Let
\begin{equation}\label{l2}h_n:=\frac{Z_{n,\beta ,Q}}{Z_{n-1,\beta ,\frac{nQ}{n-1}}}.\end{equation}
Then
 \begin{equation}\label{l3} H_2=\frac{1}{h_nZ_{n-1,\beta ,\frac{nQ}{n-1}}}\int_W\int_{\tilde{G}_{n-1}}A_{n,\beta ,Q }(w,z_2,...z_n)d\tau .\end{equation}
Now, given $\epsilon >0$, using Corollary  2.2, let $G$ be a neighbourhood of $\mu_{K,\beta ,Q}\subset \mathcal{M}(K)$ so that for $w\in W$ and $ (z_2,\ldots,z_n)\in\tilde{G}_{n-1} $ then
 \begin{equation}\label{l4}\beta\Big (\frac{1 }{n-1}\sum_{j=2}^n\log |w-z_j|-\int\log|w-t|d\mu_{K,\beta ,Q}(t)\Big )\leq\epsilon.\end{equation}
Taking exponentials
\begin{equation}\label{l41} \prod_{j=2}^n|w-z_j|^{\beta }\leq e^{(n-1)(\epsilon+\beta\int\log |w-t|d\mu_{K,\beta ,Q}(t))}.\end{equation}
Also, since ${\tilde{G}_{n-1}}\subset K^{n-1}$

 \begin{equation}\label{l8}\frac{1}{Z_{n-1,\beta ,\frac{nQ}{n-1}}}\int_{\tilde{G}_{n-1}}|D(z_2,\ldots ,z_n)|^{\beta }e^{-2n[Q(z_2)+\ldots +Q(z_n)]}d\tau (z_2)\ldots d\tau (z_n)\leq 1.\end{equation}

Using Theorem 5.1 to estimate $h_n$,  the  inequalities in (\ref{l41}) and (\ref{l8}), and recalling that $\tau(W)\neq 0$ since the support of $\tau$ is $K$, we have
\begin{equation}\label{l10}\limsup_n1/n\log (Prob_{n,\beta ,Q}\{w\in W\})\end{equation}
$$\leq \beta\rho + sup_{w\in W}(\beta \int_K\log|w-t|d\mu_{K,\beta ,Q}-2Q(w)).$$
and using (\ref{lc}) we have 
\begin{equation}\label{l101}\inf_{\{W|z_1\in W\}}\limsup_n1/n\log (Prob_{n,\beta ,Q}\{w\in W\})\end{equation}
$$\leq \beta\rho + \beta \int_K\log|z_1-t|d\mu_{K,\beta ,Q}-2Q(z_1).$$

Now we must deal with the $\liminf.$ First we consider $z_1\notin S_R.$  Let $W$ be a neighbourhood of $z_1$ with $\overline{W}\cap S_R=\emptyset,$ and
let $N$ be a compact neighbourhood of $S_R$ such that $N\cap \overline{W}=\emptyset .$
Now $A_{n,\beta ,Q }(z)$  is  in each variable of the form $e^{-2nQ}|p|^{\beta}$ for a polynomial $p\in\mathcal{P}_n$. Hence by repeated use of Lemma 3.11  we have,
 
\begin{equation}\label{l22}\int_{K^n}A_{n,\beta ,Q}(z)d\tau (z)\leq (1+O(e^{-cn}))\int_{N^n}A_{n,\beta ,Q}(z)d\tau (z),\end{equation}
or
\begin{equation}Z_{n,\beta ,Q}(K)\leq (1+O(e^{-cn}))Z_{n,\beta ,Q}(N)\end{equation}
where $K$ and $N$ indicate the sets on which the normalizing constants are evaluated.
Similarly,
\begin{equation}\label{l23}Z_{n-1,\beta ,\frac{nQ}{n-1}}(K)\leq (1+O(e^{-cn})) Z_{n-1,\beta ,\frac{nQ}{n-1}}(N).\end{equation}
Now
$$\frac{1}{Z_{n,\beta ,Q} (K)}\int_W\int_{K^{n-1}}A_{n,\beta ,Q }(w,z_2,...z_n)d\tau$$
$$\geq\frac{1}{Z_{n,\beta ,Q}(K) }\int_W\int_{N^{n-1}}A_{n,\beta ,Q }(w,z_2,...z_n)d\tau$$

and using (\ref{l22}), to obtain the lower bound it suffices to estimate:
$$\frac{1}{Z_{n,\beta ,Q}(N) }\int_W\int_{N^{n-1}}A_{n,\beta ,Q }(w,z_2,...z_n)d\tau  =$$
$$\frac{1}{h_n(N)Z_{n-1,\beta ,\frac{nQ}{n-1}}(N)}\int_W\int_{N^{n-1}}A_{n,\beta ,Q }(w,z_2,...z_n)d\tau.$$
Given $\epsilon >0$ let $F$ be a neighbourhood of $\mu_{K,\beta ,Q}$ in $\mathcal{M}(N)$ such that for $w\in W$ and $(z_2,\ldots,z_n)\in \tilde{F}_{n-1}$ we have, 
\begin{equation}-\epsilon\leq \beta \Big(\frac{1 }{n-1}\sum_{j=2}^n\log |w-z_j|-\int\log|w-t|d\mu_{K,\beta ,Q}(t)\Big).\end{equation}
 Such an $F$ exists since $\log|w-t|$ is continuous for $t\in N$ and $w\in W.$

Taking exponentials
\begin{equation}\label{l23} \prod_{j=2}^n|w-z_j|^{\beta }\geq e^{(n-1)(-\epsilon+\beta\int\log |w-t|d\mu_{K,\beta ,Q}(t))}.\end{equation}
Now  we have (see (\ref{e6}))
\begin{equation}\label{l24}\frac{1}{Z_{n-1,\beta ,\frac{nQ}{n-1}}(N)}
\int_{N^{n-1}\setminus\tilde{F}_{n-1}}|D(z_2,\ldots ,z_n)|^{\beta }e^{-2n[Q(z_2)+\ldots +Q(z_n)]}d\tau \end{equation}
$$\leq  O(e^{-cn^2}).$$
So  

 \begin{equation}\label{l25}1-O(e^{-cn^2})\leq\frac{1}{Z_{n-1,\beta ,\frac{nQ}{n-1}}(N)}\int_{\tilde{F}_{n-1}}|D(z_2,\ldots ,z_n)|^{\beta }e^{-2n[Q(z_2)+\ldots +Q(z_n)]}d\tau \end{equation}
Using the  inequalities in (\ref{l23}) and (\ref{l25}), and the fact that $\tau (W)\neq0,$  we have,
\begin{equation}\label{l112}\liminf_n1/n\log (Prob_{n,\beta ,Q}\{w\in W\})\end{equation}
$$\geq \beta \rho + inf_{w\in W}(\beta \int_K\log|w-t|d\mu_{K,\beta ,Q}-2Q(w)).$$
By (\ref{lc}) it follows that
\begin{equation}\label{l13}\inf_{\{W|z_1\in W\}}\lim_n\frac{1}{n}\log ( Prob_{n,\beta ,Q}\{w\in W\})\end{equation}
$$= \beta(\rho +  \int_K\log|z_1-t|d\mu_{K,\beta ,Q})-2Q(z_1).$$

To complete the l.d.p. we must  consider the case when $z_1\in S_R$ and to do so we must estimate $Prob_{n,\beta ,Q}\{w\in W\}$ when $W\cap S_R\not=\emptyset.$  In fact, we will show that, in this case, \begin{equation}\label{l131}\lim_n\frac{1}{n}\log (Prob_{n,\beta ,Q}\{w\in W\})=0.\end{equation}\\
Now,
$$n Prob_{n,\beta ,Q}\{w\in W\}\geq Prob_{n,\beta ,Q }\{\text{at least one of }w, z_2,\ldots ,z_n\in W\}$$  $$=1-Prob_{n,\beta ,Q }\{\text{each  of }w,z_2,\ldots ,z_n \in K\setminus W\}.$$
Since $W\cap S_R\neq\emptyset$ the support of the weighted  equilibrium measure for $R$ on $K\setminus W$   cannot be $S_R$. This implies, using  Proposition 4.1  and the minimizing property of the equilibrium measure, that $\limsup_n \sup_{K\setminus W}A_{n,\beta ,Q}(z)^{\frac{1}{n^2}}\leq \gamma -\eta,$ for some $\eta>0.$ (Recall that $\gamma= \limsup_n \sup_KA_{n,\beta ,Q}(z)^{\frac{1}{n^2}}.$) Then one can use Proposition 4.2  to obtain

\begin{equation}\label{l14} Prob_{n,\beta ,Q}\{w\in W\}\geq\frac{1}{n}(1-O(e^{-cn^2}))\end{equation}
and (\ref{l131}) follows.
\end{proof}

\section{ the unbounded case}
Let $Y$ be a closed, unbounded  subset of $\CC.$ Let $R$ be a continuous, real - valued,  superlogarithmic function on $Y.$
That is, for some $b>0,$
$$\lim_{|z|\to\infty}(R(z)-(1+b)\log|z|)=+\infty.$$
For $r>0$ we let $Y_r=:\{z\in Y||z|\leq r\}$ and we assume $Y_r$ is regular for $r$ sufficiently large. 
Also, for $r$ sufficiently large $V_{Y,R}=V_{Y_r,R} $ (\cite{ST},Appendix B, Lemma 2.2) and $S_R^*\subset Y_r.$ We will also denote the equilibrium measure $\mu_{Y,R}$ by $\mu_{Y,\beta ,Q}.$

We will extend the l.d.p. (Theorem 6.2) from the compact case to the unbounded case.
 We will need an additional hypothesis on the growth of the measure $\tau.$ 

\begin{hypothesis}

 $\tau$ is a locally finite  measure on $Y$ satisfying:
\begin{equation}\label{ua}
\text{For some } a>0 ,\text{we have} \int_Yd\tau /|z|^a<+\infty
\end{equation}
\end{hypothesis}

We note that Lebesgue measure on $\RR$ or $\CC$ satisfies Hypothesis 7.1.

The following theorem will extend Lemma 3.11 to the unbounded case. Both Lemma 3.11 and Theorem 7.2 are based on (\cite{ST}, Chapter  III,  Theorem  6.1). They shows that the $L^{\beta} $ norm of a weighted polynomial "lives" on $S_R^*.$

\begin{theorem}Let $Y$ be a closed,  unbounded subset of  $\CC$ with $Y_r$ regular for $r$ sufficiently large. Let $R$ be a superlogarithmic real-valued function on $Y.$  
 Let $\beta>0$  and let  $N\subset Y$  be a compact neighbourhood of $S_R^*$. Let $\tau$ be a measure on $Y$ such that Hypotheses 3.9 and 7.1 are satisfied.
 Then there is a constant $c>0$ (independent of $n,p$) such that:$$\int_Y|e^{-nR}p|^{\beta}d\tau\leq (1+O(e^{-nc}))\int_N|e^{-nR}p|^{\beta}d\tau$$
for all $p\in\mathcal{P}_n.$\end{theorem}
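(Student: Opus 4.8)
The plan is to follow the architecture of the proof of Lemma 3.11, isolating the one genuinely new difficulty: the region $Y\setminus N$ is now unbounded. As there, I would first normalize so that $\|e^{-nR}p\|_{S_R}=1$; by (\ref{p51}) this equals $\|e^{-nR}p\|_Y=1$, and that identity is valid for unbounded $Y$ with superlogarithmic $R$. It then suffices to prove a tail bound $\int_{Y\setminus N}|e^{-nR}p|^{\beta}d\tau\leq Ce^{-nc}$ together with a lower bound $\int_N|e^{-nR}p|^{\beta}d\tau\geq C'(1+\epsilon)^{-n\beta}$; writing $\int_Y=\int_N+\int_{Y\setminus N}$ and dividing, with $\epsilon$ chosen so that $\beta\log(1+\epsilon)<c$, yields the asserted factor $1+O(e^{-nc'})$ with a possibly smaller exponent $c'>0$.

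The lower bound is immediate and identical to the compact case. Since $S_R\subset S_R^*\subset N$ and $\tau$ satisfies the weighted BM inequality on the compact neighbourhood $N$ of $S_R^*$ (Hypothesis 3.9), the $L^{\beta}$ version (\ref{bm3}) gives $1=\|e^{-nR}p\|_{S_R}\leq\|e^{-nR}p\|_N\leq C(1+\epsilon)^n\big(\int_N|e^{-nR}p|^{\beta}d\tau\big)^{1/\beta}$, whence $\int_N|e^{-nR}p|^{\beta}d\tau\geq C'(1+\epsilon)^{-n\beta}$. For the tail I would split $Y\setminus N=(Y_r\setminus N)\cup(Y\setminus Y_r)$ for a suitably large $r$, using throughout the pointwise weighted Bernstein-Walsh estimate $|e^{-nR(z)}p(z)|\leq\|e^{-nR}p\|_{S_R}\exp\!\big(n(V_{Y,R}(z)-R(z))\big)$ valid on all of $\CC$; this is the analogue of (\ref{l13}) and follows from (\ref{p2}) applied to the subharmonic competitor $\tfrac{1}{n}\log|p|-\tfrac{1}{n}\log\|e^{-nR}p\|_{S_R}$, where degree $\leq n$ supplies the growth control $\leq\log^+|z|+C_u$ at infinity. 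On the compact annulus $Y_r\setminus N$, which is disjoint from $S_R^*=\{V_{Y,R}=R\}$, continuity of $V_{Y,R}$ (from regularity of $Y_r$) and compactness give $V_{Y,R}-R\leq -b'<0$, so $\int_{Y_r\setminus N}|e^{-nR}p|^{\beta}d\tau\leq\tau(Y_r)e^{-n\beta b'}$, which is finite since $\tau$ is locally finite.

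The genuinely unbounded piece $\int_{Y\setminus Y_r}$ is the step I expect to be the main obstacle. Here I would combine three facts: first, $V_{Y,R}(z)\leq\log|z|+C_0$ for $|z|\geq1$, which follows from (\ref{p3}) since $\mu_{Y,R}$ has compact support; second, superlogarithmicity (\ref{p0}), which for any prescribed $M$ yields $R(z)\geq(1+b)\log|z|+M$ once $|z|\geq r_M$; and third, Hypothesis 7.1. Choosing $r\geq r_M$ large enough that also $N\subset Y_r$, the Bernstein-Walsh bound gives, on $Y\setminus Y_r$, the estimate $|e^{-nR(z)}p(z)|^{\beta}\leq e^{n\beta(C_0-M)}|z|^{-n\beta b}$. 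For $n$ large enough that $n\beta b\geq a$ one has $|z|^{-n\beta b}\leq|z|^{-a}$ on $|z|\geq1$, so $\int_{Y\setminus Y_r}|e^{-nR}p|^{\beta}d\tau\leq e^{n\beta(C_0-M)}\int_Y|z|^{-a}d\tau$, which is $Ce^{-n\beta(M-C_0)}$ once $M>C_0$.

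The crux is precisely this interplay. The factor $|z|^{n\beta}$ produced by $V_{Y,R}(z)\sim\log|z|$ would by itself be fatal, but the superlogarithmic decay of the weight overwhelms it, leaving a residual polynomial decay $|z|^{-n\beta b}$ that is in turn absorbed by the integrability $\int_Y d\tau/|z|^a<\infty$ of Hypothesis 7.1 as soon as $n\beta b\geq a$ — and the bound is uniform in $p\in\mathcal{P}_n$ for all large $n$, which is all that the $O(e^{-nc})$ conclusion requires. Collecting the two tail pieces gives $\int_{Y\setminus N}|e^{-nR}p|^{\beta}d\tau\leq Ce^{-nc}$ with $c=\min\{\beta b',\beta(M-C_0)\}$, and dividing by the lower bound established above completes the proof.
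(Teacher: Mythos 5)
Your proposal is correct and takes essentially the same approach as the paper: the decisive new estimate on the unbounded tail $Y\setminus Y_r$ --- Bernstein--Walsh with $\|e^{-nR}p\|_{S_R}$, the bound $V_{Y,R}(z)\leq\log|z|+C$, superlogarithmic growth of $R$ to get $|e^{-nR(z)}p(z)|^{\beta}\leq C^{n}|z|^{-n\beta b'}$, and Hypothesis 7.1 to absorb this into $\int_Y d\tau/|z|^{a}$ --- is exactly the paper's argument with slightly different bookkeeping, and your compact-annulus and lower-bound steps are precisely the content of Lemma 3.11, which the paper invokes after reducing to the case $N=Y_r$. Your only departures are organizational (you inline Lemma 3.11 rather than citing it) and your explicit balancing of $\beta\log(1+\epsilon)$ against $c$ in the final division is, if anything, a bit more careful than the paper's formulation via $\liminf_n\bigl(\int_Y|e^{-nR}p|^{\beta}d\tau\bigr)^{1/n}\geq 1$.
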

\begin{proof}
We need only consider the case $N=Y_r$ for $r$ large and then the result will follow by using  Lemma 3.11.

We first normalize the polynomial so that $||e^{-nR}p||_{S_R}=1.$ To prove the theorem it will suffice to show that
\begin{equation}\label{u2}\int_{Y\setminus Y_r}|e^{-nR(z)}p(z)|^{\beta}d\tau\leq Ce^{-nc},\end{equation} for some constants $C,c>0$ and
\begin{equation}\label{u3}\liminf_n \Big(\int_Y|e^{-nR}p|^{\beta}d\tau \Big)^{1/n}\geq 1.\end{equation} 

We will use the estimate (\cite{ST} , Appendix B, Theorem 2.6 (ii)) 
\begin{equation}\label{u4}|e^{-nR(z)}p(z)|\leq||e^{-nR(z)}p(z)||_{S_R}
\exp(n(V_{Y,R}(z)-R(z)) \end{equation}for $z\in Y.$\\

Using (\ref{p0}) we have, since $V_{Y,R}\leq\log |z| +C$ for $|z|$ large
 \begin{equation}\label{u5}V_{Y,R}(z)-R(z)\leq -\frac{b}{2}\log|z|\end{equation}
for $|z|$ large .\\
Hence, for $r$ large
\begin{equation}\label{u6}\int_{Y\setminus Y_r}|e^{-nR(z)}p(z)|^{\beta}d\tau\leq C\int_{Y\setminus Y_r}\frac{d\tau}{|z|^{\frac{nb\beta}{2}}}
\end{equation}
$$\leq Cr^{-\frac{nb\beta}{2}}r^a\int_{Y\setminus Y_r}\frac{d\tau}{|z|^a}\leq Ce^{-nc}.$$
for constants $C,c>0$ and all $n$ sufficiently large.\\

Now,\begin{equation}\label{u9}\int_Y|e^{-nR(z)}p(z)|^{\beta}d\tau\geq \int_{Y_{r}}|e^{-nR(z)}p(z)|^{\beta}d\tau\end{equation}and (\ref{u3}) follows from the proof of  Lemma 3.11.

\end{proof}
Let $Y$ be a closed,  unbounded subset of $\CC$ with $Y_r$ regular for $r$ sufficiently large. Let $R$ be a superlogarithmic real-valued function on $Y.$  We consider $\beta$ ensembles $Prob_{n,\beta ,Q}$ on $Y$. 
\begin{theorem}Let $Y$ and $R$ be as above. Assume that the measure $\tau$ satisfies Hypotheses 3.9 and 7.1 and supp($\tau )=Y.$ Also assume that Hypothesis 6.1 holds. Then the sequence of probability measures defined by, for $W$ an open subset of $Y$,  $\psi_n(W)=Prob_{n,\beta ,Q}\{z_1\in W\}$  satisfy an l.d.p.  with speed $n$ and rate function
$$\mathbb{J}_{Y,\beta ,Q}(z_1)=\beta ( R(z_1)-V_{Y,R}(z_1)) =2Q(z_1)-V_{Y,R}(z_1).$$\end{theorem}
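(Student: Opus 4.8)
The plan is to reduce the unbounded problem to the compact l.d.p.\ of Theorem 6.2 on a large section $N=Y_r$, and to supply the single genuinely new ingredient, exponential tightness. Fix $r$ large enough that $N=Y_r$ is a compact, regular neighbourhood of $S_R^*$ with $S_R^*\subset\mathrm{int}(N)$; for such $r$ one has $V_{Y,R}=V_{N,R}$ and $\mu_{Y,R}=\mu_{N,R}$, while $S_R$, $S_R^*$ and $\rho$ computed on $N$ agree with those on $Y$, so that $\mathbb{J}_{Y,\beta,Q}=\mathbb{J}_{N,\beta,Q}$ on $N$. As in Section 6 I would invoke (\cite{AD}, Theorem 4.1.11): since the level sets $\{\mathbb{J}_{Y,\beta,Q}\le l\}$ are compact (because $V_{Y,R}(z)\le\log|z|+C$ forces $\mathbb{J}_{Y,\beta,Q}(z)\ge\beta(R(z)-\log|z|)-C\to+\infty$ by (\ref{p0})), it suffices to verify the local estimate (\ref{lr}) at every $w\in Y$ together with exponential tightness.

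The reduction rests on Theorem 7.2. Viewing $A_{n,\beta,Q}(w,z_2,\dots,z_n)$ in each variable $z_j$ as $|e^{-nR}q|^\beta$ for a polynomial $q\in\mathcal{P}_n$ (the weight is $e^{-nR}$ since $2Q=\beta R$), repeated application of Theorem 7.2 in the $n-1$ inner variables yields $\int_{Y^{n-1}}A_{n,\beta,Q}(w,\cdot)\,d\tau=(1+O(e^{-nc}))\int_{N^{n-1}}A_{n,\beta,Q}(w,\cdot)\,d\tau$ (the constant in $O(e^{-nc})$ is uniform in $q$, hence in $w$), and likewise $Z_{n,\beta,Q}(Y)=(1+O(e^{-nc}))Z_{n,\beta,Q}(N)$ and $Z_{n-1,\beta,\frac{nQ}{n-1}}(Y)=(1+O(e^{-nc}))Z_{n-1,\beta,\frac{nQ}{n-1}}(N)$, the reverse bounds being trivial as $N\subset Y$. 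Consequently $h_n:=Z_{n,\beta,Q}(Y)/Z_{n-1,\beta,\frac{nQ}{n-1}}(Y)=(1+O(e^{-nc}))h_n(N)$, so $h_n^{1/n}\to e^{-\rho\beta}$ by Theorem 5.1 applied on $N$. None of these $1+O(e^{-nc})$ factors affects a $\tfrac1n\log$ limit, so for $w$ with $\overline{B(w,\epsilon)}\subset\mathrm{int}(N)$ the estimate (\ref{lr}) (including the value $0$ when $w\in S_R$) is immediate from Theorem 6.2 on $N$.

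For $w\in Y\setminus\mathrm{int}(N)$ I would argue directly after the same inner-variable reduction. Reducing to $N$, $\psi_n(B(w,\epsilon))$ is comparable up to $1+O(e^{-nc})$ to $h_n(N)^{-1}\int_{B(w,\epsilon)}e^{-2nQ(u)}\,E^{N}_{n-1}\!\big[\prod_{j\ge2}|u-z_j|^\beta\big]\,d\tau(u)$, where $E^{N}_{n-1}$ is expectation under the $(n-1)$-ensemble on the compact set $N$. Since the inner points now lie in $N$, Proposition 4.3 and Corollary 2.2 apply on $N$; the only new point is that Corollary 2.2 must be used with $u$ ranging over the compact set $\overline{B(w,\epsilon)}$, which is legitimate because the Hartogs' lemma step in the proof of Lemma 2.1 gives the required upper bound uniformly on any fixed compact set in the $u$ variable. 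Running the $\limsup$ computation of Theorem 6.2 then gives $\limsup_n\tfrac1n\log\psi_n(B(w,\epsilon))\le\beta\rho+\sup_{u\in\overline{B(w,\epsilon)}}(\beta\int\log|u-t|\,d\mu_{Y,R}(t)-2Q(u))$; for the matching $\liminf$ (here $w\notin S_R$) one reduces the inner variables instead to a compact neighbourhood $N'$ of $S_R^*$ disjoint from $\overline{B(w,\epsilon)}$ and runs the lower-bound computation of Theorem 6.2. Letting $\epsilon\to0$ and using continuity of $u\mapsto\int\log|u-t|\,d\mu_{Y,R}(t)$ and (\ref{p3}) gives $\lim_{\epsilon\to0}\lim_n\tfrac1n\log\psi_n(B(w,\epsilon))=\beta V_{Y,R}(w)-2Q(w)=-\mathbb{J}_{Y,\beta,Q}(w)$.

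Finally, exponential tightness. Writing $A_{n,\beta,Q}=|e^{-nR(z_1)}p(z_1)|^\beta A_{n-1,\beta,\frac{nQ}{n-1}}(z_2,\dots,z_n)$ with $p(z_1)=\prod_{j\ge2}(z_1-z_j)$ monic of degree $n-1$, estimates (\ref{u4}) and (\ref{u5}) give, exactly as in (\ref{u6}), $\int_{Y\setminus Y_s}|e^{-nR}p|^\beta\,d\tau\le C\|e^{-nR}p\|_{S_R}^\beta\,s^{-n\beta b/2+a}\int_Y d\tau/|z|^a$ for $s$ large; the weighted Bernstein--Markov inequality (\ref{bm3}) on $N$ together with (\ref{p51}) bounds $\|e^{-nR}p\|_{S_R}^\beta\le C(1+\epsilon)^{n\beta}\int_N|e^{-nR}p|^\beta\,d\tau$. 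Integrating the remaining variables over $Y^{n-1}$ and using $\int_N\int_{Y^{n-1}}A_{n,\beta,Q}\le Z_{n,\beta,Q}(Y)$ yields $\psi_n(Y\setminus Y_s)\le C(1+\epsilon)^{n\beta}s^{-n\beta b/2+a}$, so $\limsup_n\tfrac1n\log\psi_n(Y\setminus Y_s)\le\beta\log(1+\epsilon)-\tfrac{\beta b}{2}\log s$, which is $\le-l$ once $\epsilon$ is small and $s$ large. This establishes exponential tightness and, with the local estimate, the l.d.p. The main obstacle is precisely this interplay between the unbounded tail and the potential theory: controlling contributions of coordinates with $|z_j|$ or $|w|$ large, which is what forces the inner-variable reduction to $N$ (Theorem 7.2), the uniform Hartogs bound for far $w$, and the Bernstein--Markov tail estimate above.
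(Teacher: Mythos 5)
Your proposal is correct and follows essentially the paper's own route: reduce to the compact case via repeated application of Theorem 7.2 in the inner variables (which also shows the normalizing constants are finite and that the $\tfrac{1}{n}\log$ asymptotics agree for ensembles on $Y$ and on $Y_r$), invoke the Section 6 results on the compact set, and establish exponential tightness by exactly the tail estimate (\ref{u151})--(\ref{u152}), combining the superlogarithmic bound (\ref{u5}), Hypothesis 7.1, and the weighted Bernstein--Markov inequality. Your separate direct treatment of $w\in Y\setminus\mathrm{int}(N)$ via a uniform Hartogs bound is sound but not needed: since (\ref{lr}) is a local criterion, the paper simply enlarges $r$ so that $W\subset Y_r$ and applies Theorem 6.2 on $Y_r$ directly.
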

\begin{proof}
As noted in the proof of Theorem 6.2, $$\mathbb{J}_{Y,\beta ,Q}(z_1)=2Q(z_1)-\beta (\int_Y\log |z_1-t|d\mu_{Y,\beta ,Q}(t)+\rho).$$
To  prove the l.d.p. will show that equation  (\ref{lr}) holds,  together with exponential tightness.

We consider an open set $W\subset Y.$  We may assume that $W\subset Y_r.$
Applying the results of section 6 to $\beta$ ensembles on the compact set $Y_r$ for $r$ sufficiently large, 
we have \begin{equation}\label{u14}\inf_{\{W|z_1\in W\}}\lim_n\frac{1}{n} \log Prob_{n,\beta ,Q}\{z_1\in W\}=\beta(V_{Y,R}(z_1)-R(z_1)).\end{equation}
We will show the same result holds for $\beta$  ensembles on $Y.$

Let $N\subset Y$ be a compact neighbourhood of $S_R^*.$
Now $A_{n,\beta ,Q }(z)$  and  $\int_WA_{n,\beta ,Q}(z)d\tau (z_1)$, are, in each variable of the form $e^{-2nQ}|p|^{\beta}$ for a polynomial $p\in\mathcal{P}_n$. Hence by repeated use of Theorem 7.2  we have,\\ ( see also section 6)
 
\begin{equation}\label{u12}\int_{Y^n}A_{n,\beta ,Q}(z)d\tau (z)\leq(1+O(e^{-cn}))\int_{N^n}A_{n,\beta ,Q}(z)d\tau (z),\end{equation}
and\begin{equation}\label{u121} \int_W\int_{Y^{n-1}}A_{n,\beta ,Q}(z)d\tau (z)\leq(1+O(e^{-cn}))\int_W\int_{N^{n-1}}A_{n,\beta ,Q}(z)d\tau (z).\end{equation}
Note that (\ref{u12}) shows that the integrals defining the normalizing constants (\ref{i3}) are finite.
It also follows that taking $N=Y_r$ whether we consider $\beta$ ensembles  on $Y$ or $Y_r$ that $\lim_n\frac{1}{n} \log Prob_{n,\beta ,Q}\{z_1\in W\}$ will be the same.

To complete the large deviation property in the unbounded case we must establish exponential tightness. That is :
\begin{equation}\label{u15}\lim_n\frac{1}{n}\log Prob_{n,\beta ,Q}\{|z_1|>r\}\rightarrow -\infty \text{ as } r\rightarrow\infty\end{equation}

Proceeding as in the proof of  Theorem 7.2  but without normalizing the polynomial, we obtain
\begin{equation}\label{u151}\int_{Y\setminus Y_r}|e^{-nR(z)}p(z)|^{\beta}d\tau\leq  ||e^{-nR(z)}p(z)||_{S_R}^{\beta}\int_{Y\setminus Y_r}\frac{d\tau}{|z|^{\frac{nb\beta}{2}}}\end{equation}
$$\leq C  ||e^{-nR(z)}p(z)||_{S_R}^{\beta}r^{\frac{-nb\beta}{2}+a}$$
where $C$ is independent of $n,p.$\\

By the weighted BM inequality, we have
\begin{equation}\label{u152} ||e^{-nR(z)}p(z)||_{S_R}^{\beta}\leq C(1+\epsilon)^n\int_{Y_r}|e^{-nR}p|^{\beta}d\tau\end{equation}
$$\leq C(1+\epsilon)^n\int_Y|e^{-nR}p|^{\beta}d\tau.$$
Now$$Prob_{n,\beta ,Q}\{|z_1|>r\}=\frac{1}{Z_{n\beta ,Q}}\int_{Y\setminus Y_r}\int_{Y^{n-1}}A_{n,\beta, Q}(z)d\tau(z)$$ and using (\ref{u151}) and (\ref{u152}) we have
$$Prob_{n,\beta ,Q}\{|z_1|>r\}\leq C(1+\epsilon )^nr^{\frac{-nb\beta}{2}+a}\frac{1}{Z_{n\beta ,Q}}\int_{Y^n}A_{n,\beta, Q}(z)d\tau(z).$$\\
But$\frac{1}{Z_{n,\beta ,Q}}\int_{Y^n}A_{n,\beta, Q}(z)d\tau(z)=1$
and since $\epsilon > 0$ is arbitrary, Theorem 7.3  is established.

\end{proof}
\begin{remark}

 Consider the probability distributions on $Y,$ given by, for $W$ an open subset of $Y:$  $$\psi'_n(W):=Prob_{n,\beta ,Q}\{\text{at least one of the coordinates } z_1,z_2,\ldots , z_n\in W\}.$$ Then the sequence of probability distributions $\psi_n'$ under the  hypothesis of Theorem 7.3  satisfy the same l.d.p. as $\psi_n.$ This is because the joint probability distribution is symmetric in $z_1,\ldots,z_n$ so
$$nProb\{z_1\in W\}\geq Prob\{\text{at least one of }z_1,\ldots,z_n\in W\}\geq Prob\{z_1\in W\},$$ and  the l.d.p for $\psi_n$  has speed $n.$
\end{remark}

\end{document}